\newtheorem{theorem}{Theorem}[section]
\newtheorem{lemma}[theorem]{Lemma}
\newtheorem{proposition}{Proposition}
\theoremstyle{definition}
\newtheorem{remark}{Remark}
\newcommand{\ti}[1]{\tilde{#1}}
\def\d{\partial}
\def\X{\mathcal{X}}
\def\na{\nabla}
\def\G{\Gamma}
\def\ps{\psi_{\mbox{\tiny{$\na$}}}}
\def\pu{{\alpha}}
\def\bea{\begin{eqnarray}}
\def\eea{\end{eqnarray}}
\def\bl{\begin{lemma}}
\def\el{\end{lemma}}
\def\br{\begin{remark}}
\def\er{\end{remark}}
\def\a{\alpha}	
\def\bna{\tilde{\nabla}}
\newcommand{\R}{\mathbb{R}}
\newcommand{\sfi}{\sin{\phi}}
\newcommand{\cte}{\cos{\theta}}
\newcommand{\ste}{\sin{\theta}}
\newcommand{\cfi}{\cos{\phi}}
\title{About simple variational splines  \\from the Hamiltonian viewpoint}
\date{} 
\begin{document}

\maketitle

\vspace{-1cm}

\centerline{\scshape Paula Balseiro }
{\footnotesize
 \centerline{Departamento de Matem\'atica Aplicada,  Universidade Federal Fluminense
}
   \centerline{Rua M\'ario Santos Braga, S/N, Campus do Valonguinho }
   \centerline{ 24020-140, Niter\'oi, RJ, Brazil } 
   \centerline{pbalseiro@vm.uff.br}
} 

\smallskip

\centerline{\scshape Teresinha J. Stuchi}
{\footnotesize
 \centerline{Departamento de F\'isica Matem\'atica,  Universidade Federal do Rio de Janeiro}
   \centerline{ Centro de Tecnologia - Bloco A - Cidade Universit\'aria - Ilha do Fund\~ao}
   \centerline{21941-972 Rio de Janeiro - RJ - Brazil }
   \centerline{email{tstuchi@if.ufrj.br} 
}

\smallskip

\centerline{\scshape Alejandro Cabrera }
{\footnotesize
 \centerline{ Departamento de Matem\'atica Aplicada,  Universidade Federal do Rio de Janeiro}
   \centerline{ Centro de Tecnologia - Bloco C - Cidade Universit\'aria - Ilha do Fund\~ao}
   \centerline{21941-909 Rio de Janeiro - RJ - Brazil }
   \centerline{acabrera@labma.ufrj.br} 
   }

\smallskip

\centerline{\scshape Jair Koiller}
{\footnotesize
 \centerline{Instituto Nacional de Metrologia, Qualidade e Tecnologia}
   \centerline{Divis\~ ao de Metrologia em Din\^amica de Fluidos} 
\centerline{25250-020, Xer\'em, Duque de Caxias - RJ - Brazil } 
\centerline{jairkoiller@gmail.com}
}


\smallskip

{\footnotesize \begin{abstract}
In this paper, we study simple splines on a Riemannian manifold $Q$ from the point of view of the Pontryagin maximum principle (PMP) in optimal control theory. The control problem consists in finding smooth curves matching two given tangent vectors with the control being the curve's acceleration, while minimizing a given cost functional. We focus on cubic splines (quadratic cost function) and on time-minimal splines (constant cost function) under bounded acceleration. We present a general strategy to solve for the optimal hamiltonian within the PMP framework based on splitting the variables by means of a linear connection. We write down the  corresponding hamiltonian equations in intrinsic form and study the corresponding hamiltonian dynamics in the case $Q$ is the $2$-sphere. We also elaborate on possible applications, including landmark cometrics in computational anatomy.\\ \\
2010 MSC:  53D20, 65D07; 49J15, 70H06 \\  \\ 
keywords: Riemannian splines,  Computational anatomy, \\
$\mbox{     \,\,\,\,\,\,\,\,\,\,\,\,\,\,\,    \,\,\,\,\,    }$  Geometric control, Reduction, Reconstruction
   \end{abstract}
   }


  \newpage

{\small
 
   \tableofcontents

} 


\section{Introduction}

  Cubic Riemannian splines and their  higher order  extensions   have been instrumental  for  interpolation and statistics on manifolds.   The  bibliography is vast,  see e.g.  \cite{Popiel2007}, \cite{Hinkle2012}, \cite{Hinkle2014}, \cite{Burnett2013}, 
  \cite{Gay-Balmaz2012a}, \cite{Gay-Balmaz2012},  \cite{Niethammer2011}, \cite{Steinke2010}, \cite{Fletcher2013}.    In this note we consider only the simplest case, namely, of  splines having the tangent bundle as state space, the acceleration vector  being the control. 
We remark that a  tangent vector gives a minimal model  for a  {\it short  process.}   
For instance, the {\it rendezvous problem} in robotics and in  space science (\cite{Francis}, \cite{Francisshort}, \cite{LinFrancis}) consists of 
  planning  a path  with prescribed initial and end tangent vectors.  For instance, to achieve a smooth docking of a service spacecraft to the International Space Station. 
  
  In computational anatomy  \cite{Younes}   splines are  useful for longitudinal medical studies  \cite{Fletcher2004}, \cite{Singh2014},\cite{Singh2015}, \cite{Fiot2014},   
  combined with adaptive  machine learning techniques,  see e.g. \cite{Muralidharan2014}, \cite{Wang2014}.
   However,  the idea of comparing two  short physiological  processes seems
    not much explored as yet. This question  is important in embryology, where it is called {\it morphokinetics}   \cite{Desai:2014rr}.  
   
   There is a potential use of simple splines also in  sport science, computer animation, recognition, and video/movies repair, see \cite{Bauerarxiv}  and references therein.

 Let us now present our general framework, which belongs to the class of so-called  {\it  mechanical control problems}.
 These were   first studied  via Geometric Mechanics   in  Andrew Lewis thesis \cite{lewisthesis}, consolidating earlier work by several authors, specially R. Brocket, J. Baillieul,  A. Van der Schaft, P. Crouch and A. Bloch.  
  The standard reference is the book by Bullo and Lewis  \cite{LewisBullo}, where the  Lagrangian viewpoint  is mostly used. 

 One considers a configuration space $Q$ endowed with a Riemannian (kinetic energy) metric $g$.
The organism or device under study is  controlled by  forces that produce an acceleration $u$, taken here as the control variable.  If $\nabla$ denotes the Levi-Civita connection, the state equation on the tangent bundle  $TQ$  is
  \begin{equation} \label{stateequation}
  \nabla_{\dot{q}} \dot {q} = u\, .
  \end{equation}
\noindent  The aim is to connect two tangent vectors (or 'short processes') $v_{q_0},w_{q_1} \in TQ$  minimizing a cost functional. In this paper, we focus on the following two special cases which have received special attention.\\

  {\it    Cubic splines on  a Riemannian manifold}, which were introduced around 1990 (\cite{Noakes1989,CrouchLeite1991}), correspond to minimizing the cost functional: 
\begin{equation}  \label{cubicsplines}  \int_0^T\,\frac{\beta}{2}\, |u|^2\, dt,   
\end{equation}  with prescribed time $T$.  
Cubic splines have been extensively used in computational anatomy.

The other special case is the {\it time minimal  problem under bounded acceleration}. 
It  consists in connecting two vectors in minimum time,  under the restriction 
\begin{equation} \label{timemin}  |u| \leq A \,,\,\, {\rm where}\, A \,  {\rm is}\,  {\rm a}\,  {\rm prescribed} \, {\rm bound.}
\end{equation}
There is no  accessibility issue:  for any arbitrarily small  bound $\epsilon$ on $|u|$, under mild hypothesis on $Q$  it is possible to concatenate any two tangent vectors by a smooth curve with non vanishing velocity and acceleration norm $\leq \epsilon$ \cite{Weinstein1968}.  
The time minimal-bounded acceleration problem  is   in general (though not always)  equivalent to the   $L^{\infty}$ control problem considered recently by Noakes and  Kaya   \cite{KayaNoakes2013}, \cite{Noakes2014}, where they ask for a trajectory that minimizes the  sup  of  the norms of the accelerations, with fixed transition time. \\

From now on we shall refer to cubic splines also as \emph{$L^2$-splines}, due to the form of the underlying cost functional \eqref{cubicsplines}, and to time minimal-bounded acceleration splines as \emph{$L^{\infty}$-splines}, due to the previous discussion.\\

 Via Pontryagin's maximum principle, every  cost functional associated to  (\ref{stateequation}) yields a Hamiltonian system in
  $\,\, T^* (TQ)$.  
  In this paper, we study the resulting Hamiltonian systems in detail.

\subsection{Motivations} \label{motivation}

\subsubsection*{ Splines in $S^m$.} 
According to Lyle Noakes,  ``the problem of interpolating and approximating spherical data in the
m-dimensional manifold $S^m$ is much more widespread than might at first be   thought''  \cite{Noakesspherical2006}.    Indeed, see \cite{Watson}, \cite{Fisher}, 
\cite{bingham1985}, \cite{Dryden:2005rm} for information on spherical statistics.

In order to  match two infinitesimal processes on  $TS^m$ with simple variational splines, it seems to us that it is sufficient to understand  the cases  $m = 1,2,3$.  This is due to homogeneity under $SO(m+1)$ action:   
any two tangent vectors   $v_{q_1}, w_{q_2}$  on  `big' $TS^N$  are actually tangent to a isometrically imbedded $S^m$ with $ m \leq 3$.
 
 (For the analogous problem on $\R^N$,   $q_1$ can be the origin,  $v_1$,    $q_1 -  q_2$ and $v_2$  define at most a three dimensional subspace.)
    
  We have collected a number of references on cubic splines on $S^2$.
Somewhat surprisingly,   as far as we know, the reduction of the $SO(3)$-symmetry of  $T^*(TS^2)$  by Hamiltonian methods was  still awaiting.  We  present here  a reduction procedure, but we must confess that it works well only outside the zero section.

\subsubsection*{Splines in  {\bf Diff}}
The systems studied in this paper can be taken as finite-dimensional toy models for the following infinite-dimensional one.
Following the notations of \cite{HSS},  let $ \mathcal{D} $ a domain in $\R^d$ and    ${\rm Diff}(\mathcal{D}), \, \mathcal{X} $  respectively the group of diffeomorphisms $\phi$ and vector fields  ${\bf u}(x)$  with appropriate boundary conditions. The idea is to consider control systems as in \eqref{stateequation} but now on the infinite dimensional $Q={\rm Diff}$. Upon right translations, one has  $T {\rm Diff} = {\rm Diff} \times \mathcal{X}$,   $T^* {\rm Diff} = {\rm Diff} \times \mathcal{X}^*$. With due care to  functional analysis, $\mathcal{X}^*$ is the space of momentum densities  $\bf{m}$,  with  $m = \bf{m} \cdot {\bf dx}    \otimes   dV $. 
Consider a  Sobolev metric on  $ {\rm Diff} $ (\cite{Bruveristhesis}, \cite{harms},  \cite{Mich2Mum}, \cite{Bauer2015arXiv}, \cite{BaBruMi}, \cite{Bauer2011}, \cite{Bauer2012}, \cite{Bauer2016}). One can write  
\begin{equation}
L = \frac{1}{2}\, \int \, {\bf u}(x)\cdot  {\bf m}(x) dV \qquad \mbox{and} \qquad {\bf u}(x) =  \int \, G(x,y)\, {\bf m}(y) \, dV(y), 
 \end{equation}
where  $G(x,y)$ is the Green function for the inverse of the  linear partial differential operator that yields ${\bf m}$  when applied to  ${\bf u}$. 

EPDiff (geodesics on Diff),   the celebrated Euler-Poincar\'e partial differential equation  in terms of  the momentum density is
given by
\begin{equation}  \frac{\partial}{\partial t}\, \bf{m} +  \bf{u} \cdot \nabla \bf{m} + (\nabla \bf{u})^T \cdot \bf{m} + \bf{m}({\rm div}\, \bf{u}) = 0 \,\,\,\,\,\,  {\rm (EPDiff)},
\end{equation}
 and comes from a noble tradition going back to Arnold's interpretation  of Euler's incompressible fluid equations as geodesics in the infinite dimensional Lie group of volume preserving diffeomorphisms \cite{MumfordMichor2013}. 
One of the striking facts 
is that often the solutions of  EPDiff tend to concentrate on {\it pulson submanifolds}
\begin{equation*} {\bf m}(x,t) = \sum_{i}^N \, \int\, {\bf P}_i(s,t)\, \delta(x - {\bf Q}_i(s,t)).
\end{equation*} 
In turn, from   singular momentum solutions one recovers the vector fields via 
\begin{equation}  \label{u}  {\bf u}(x,t) =  \sum_{i}^N \, \int\, {\bf P}_i(s,t)\, G(x, {\bf Q}_i(s,t))\,ds \,\,. 
\end{equation}  
Mario Michelli  \cite{Micheli2012, Mich2Mum}  implemented   geodesic equations for   {\it landmarks}.
His formulae for for Christoffel symbols and curvatures coefficients in terms of cometrics,   can be used to  implement  a code for  landmark  splines using our methodology. In the infinite dimensional case 
cubic splines have  only recently been considered  \cite{Singh2014}, \cite{Singh2015}.    We will discuss the open problem of relating landmark splines to the infinite dimensional problem in the final section.

  \subsection{Contents of the paper}  
 In section \ref{usingconnection}, we study the control problem associated to the state equation \eqref{stateequation} from the point of view of Pontryagin maximum principle (PMP). The main theoretical results of the paper  are intrinsic formulas for the optimal hamiltonian and the resulting hamiltonian equations  (Propositions \ref{mainproposition1} and
  \ref{mainproposition2}). The key idea is to use \emph{split variables} for $T^*(TQ)$ coming from taking horizontal and vertical components w.r.t. the underlying linear connection on $TQ$.  
 A dual splitting  of $T (TQ)$ was  already explored  by Lewis and Murray and  is presented in full  details in Bullo and Lewis \cite{LewisBullosupp}.  In \cite{AlePoiJK} we generalize  the construction to $T^*A$, for $A \to Q$
 a vector bundle with a connection. 
 
 In split variables, the symplectic form is no longer canonical (it contains curvature terms, Proposition \ref{mainproposition1}) but the parametric hamiltonian appearing from the PMP has a simple form. We can then find the optimal hamiltonian for cubic and time-minimal splines easily and derive the corresponding hamiltonian equations in intrinsic form (Proposition \ref{mainproposition2}).
We show that, in the particular case $Q=S^n$ a $n$-sphere, we recover the higher dimensional case analogue to the hamiltonian system for cubic splines derived by Crouch and Leite \cite{CrouchLeite1991, CrouchLeite1995} for $S^2$.

In section \ref{observations}, we study dynamical aspects of the resulting hamiltonian equations on $T^*(TQ)$ in the particular case of cubic splines on $Q=S^2$.
Our main technical tool in this study is reduction by the natural rotation symmetry. The reduced system has two degrees of freedom. Two known  families of solutions  for cubic splines on $S^2$ are reinterpreted. They correspond to  equilibria  and  partial equilibria of  the reduced system. i)  ``Figure eights'' formed by two kissing circles with geodesic curvature $\kappa_g = 1$, run uniformly in time.  They   correspond to unstable, loxodromic, fixed points of the reduced Hamiltonian.   ii) Equators, run cubically on time,  correspond to ``partially fixed''  points of the reduced system.  
Moreover, in section \ref{simulations}  we discuss some numerical simulations. Poincar\'e sections indicate that the reduced  system  is non-integrable, but has regions rich of invariant tori. In section \ref{CrouchLeite1}, we discuss the limitations of the reduction procedure (coming from the rotation action not being free at zero tangent vectors) and provide the explicit relation to the general split variables approach.

Section \ref{conclusions}
contains some further comments  and presents suggestions for  further research.  In particular, since it is well known that    landmark geodesics
lift to geodesic solutions on  the full ${\rm Diff}$  \cite{Micheli2012},  \cite{BruverisHolm2015}, we pose the question if landmark splines can  be lifted to splines on ${\rm Diff}$. 

  In appendix \ref{program}, we present a simple Fortran program used for the reconstruction of trajectories in  $S^2$. It uses the Runge-Kutta-Fehlberg routine RK78  (\cite{Fehlberg}) which is of standard use in Celestial Mechanics  \cite{Attri}. In appendix \ref{convexsurfaces}, we discuss how to interpret the controls entering the state equation \eqref{stateequation} in terms of curvatures of the underlying curve, in the particular case $Q=\Sigma \subset \R^3$ is a convex surface. These results are applied in section \ref{observations}.

 \section{Optimal control on $TQ$ as state space}  \label{usingconnection}

As mentioned in the introduction, our main object of study will be the following optimal control problem. The state space is $TQ$, where $(Q,g)$ is a Riemannian manifold, and the control is represented by a (force or acceleration) vector field $u \in \mathcal{X}(Q)$. The state equations \eqref{stateequation} can be written in first-order form as
 \begin{equation}\label{eq:generalstateeq}
  v = \dot{x} , \ \ \ \nabla_{\dot{x}} v = u, \ \ (x(t),v(t)) \in TQ.
 \end{equation}
 Above, $\nabla_{\dot{x}}$ represents covariant derivative with respect to the Levi-Civita connection $\nabla$ on $TQ$ so that, for $u=0$, we get the equations for the geodesic flow on $TQ$. We impose the boundary conditions that $(x(t),v(t))$ is fixed to be given $(x_0,v_0),(x_f,v_f) \in TQ$ at initial and final times.
 
 To recover the second-order equation \eqref{stateequation}, we notice that the first equation in \eqref{eq:generalstateeq} above says that we are dealing with a second order problem for $x(t)\in Q$. More precisely, the above equations correspond to the second order vector field $U\in \X_{(2nd)}(TQ)$ given by
 $$ U|_{(x,v)} = hor_\nabla(v)|_{(x,v)} + u^{vert}|_{(x,v)} \in T_{(x,v)}TQ, $$
 where $hor_\nabla$ denotes the horizontal lift w.r.t. $\nabla$ and $u^{vert}$ the natural vertical lift $T_xQ \to T_{(x,v)}(TQ), a \mapsto \frac{d}{dt}|_{t=0} (v + t a)$. In standard local coordinates for $TQ$, the state equations reduce to
$$ \dot{x}^k = v^k \,\,,\,\, \dot{v}^k = -  \,
\Gamma^k_{ij} v^i v^j  +   u^k \,\,,\,\,  k = 1, \cdots n,$$
where $\Gamma^k_{ij}$ are the Cristoffel symbols of $\nabla$ and sum over repeated indices is understood.

Following the introduction further, we shall also consider an optimization component in the problem: $\gamma(t)=(x(t),v(t))$ must also minimize a cost functional of the form
$$ \gamma \mapsto \int_0^T C(U|_{\gamma(t)}) dt $$
with $C: T(TQ) \to \mathbb{R}$ a given cost function. The control $u$ is also (possibly) subjected to a constraint of the form 
$$ g(u,u) \leq A^2 \qquad \mbox{for a constant $A$}.$$

\subsubsection*{Applying Pontryagin's maximum principle.} Our general strategy to attack the above optimization problem will be to apply PMP and transform it into a hamiltonian system on $T^*(TQ)$. 

We  briefly describe here  the principle in our present situation, however we 
assume that the reader is acquainted with the general recipe for PMP   (otherwise we suggest \cite{Ross} for a tutorial book,  \cite{Pontryagin} for the fundamental reference in the area). 

The general key idea is, for each state variable, to introduce a new co-state variable. This leads one to consider $T^*(TQ)$ endowed with its canonical symplectic form $\omega_0$ and consider the $u$-family of hamiltonians $H_u \in C^\infty(T^*(TQ))$ given by
\begin{equation}\label{eq:Hu}
 H_u((x,v,P)) = - C(U|_{(x,v)}) + \langle P,  U \rangle, \ \ P\in T^*_{(x,v)} (TQ), 
\end{equation}
where $\langle \cdot, \cdot \rangle$ denotes the natural pairing between covectors and vectors. 
The PMP then states that the solution to our optimal control problem is a trajectory (for some suitable initial conditions to be found) of the hamiltonian system $(T^*(TQ), \omega_0, H_*)$ where 
$$ 
H_* := \underset{u}{max} \ H_u 
$$
is the optimal hamiltonian function.

In local coordinates, denoting $y_k$ and $z_k$ the conjugated coordinates to $x^k$ and $v^k$ respectively, we have
$$ H_u  \equiv - C(x,v,u) +   \,[ y_k v^k  + z_k  ( u^k -  \,
\Gamma^k_{ij} v^i v^j )].$$
It is easy to optimize this local hamiltonian in the $u^k$'s.
In general, though, it is not so easy to find the optimal hamiltonian in an intrinsic way (i.e. globaly w.r.t. the manifold $Q$). 

In the following subsections we propose a method to achieve this and to write down the corresponding hamilton equations in intrinsic form. This method is general and makes use of 'global splittings' of variables in which the $H_u$ becomes simple (and hence easier to optimize) but the symplectic structure is no longer in canonical form (it incorporates curvature terms). 

Other methods simplifying the optimization of $H_u$ can be available in particular cases, in which case we provide the dictionary between the two.

\subsection{Hamiltonian equations from PMP in split variables}\label{subsec:splitvar}

In this subsection, we introduce global split variables to solve the optimal hamiltonian described above. These results can be seen as the
Hamiltonian analogue of some results given in Lagrangian form  in the supplementary materials of  Bullo and Lewis' book 
\cite{LewisBullosupp}. 

 Moreover, a Hamiltonian version  goes back to   Crouch, Leite
and Camarinha \cite{CrLeCa},  Iyer \cite{Iyer2005, Iyer2006}, and   more recently on Abrunheiro et al.    \cite{abrunheiro2011, abrunheiro2013a, abrunheiro2014, abrunheiro2013b} and \cite{Valle2015}. 

 In section \ref{doublebundles} we  outline further developments:  a splitting for $T^*A$, with $A$ being an affine bundle with
 connection  \cite{AlePoiJK}.    
 
The main observation is that the linear connection $\nabla$ on $q:TQ \to Q$ allows one to decompose tangent vectors into horizontal and vertical components:
$$ X \in T_{(x,v)} (TQ) = Hor|_{(x,v)}\oplus Ver|_{(x,v)} \Rightarrow  X = X_h + X_v.$$ 
In more differential geometric terms, $\nabla$ induces an Ehresmann connection for the submersion $q: TQ \to Q$ given by the bundle projection. The vertical component of $X \in T_{(x,v)}(TQ)$ can be written as $X_v = \Theta_\na(X)^{vert}$ with $\Theta_\na \in \Omega^1(TQ,TQ)$ a vector valued 1-form encoding the vertical projection.

Since both horizontal and vertical spaces can be identified with $T_xQ$ by taking horizontal and vertical lifts,
the above decomposition defines a global diffeomorphism\footnote{Actually, it is a double vector bundle isomorphism.}
\begin{equation*}
 \begin{split}
  \phi_\na: q^*TQ & \oplus_{TQ} q^* TQ \to T(TQ)\\
 w & \oplus a|_{(x,v)}  \mapsto hor_\na (w)|_{(x,v)} + a^{vert}|_{(x,v)}.
 \end{split}
 \end{equation*}
The dual decomposition 
$$ T^*_{(x,v)}(TQ) \simeq Hor^*|_{(x,v)}\oplus Ver^*|_{(x,v)}\Rightarrow P = P_h + P_v\in T^*_{(x,v)}(TQ)$$ similarly induces
 a splitting diffeomorphism
$$ \psi_\na: q^*T^*Q \oplus_{TQ} q^*T^*Q \to T^*(TQ),$$
which is characterized by\footnote{Notice that $\psi_\na$ is the inverse of the dual, w.r.t. the projection $(w\oplus a)|_{(x,v)}\mapsto (x,v)$, of $\phi_\na$.}
$$ \langle \psi_\na(p \oplus \alpha)|_{(x,v)}, \phi_\na(w\oplus a)|_{(x,v)} \rangle = \langle p, w \rangle + \langle \alpha, a\rangle.$$ 
The covectors $p$ and $\alpha$ above define {\it split variables} 
\begin{equation}\label{eq:generalsplitvars}
 (x,v,p,\alpha) \equiv (p\oplus \alpha)|_{(x,v)} = \psi_\na^{-1}(P)
\end{equation}
for every  co-vector $P\in T^*_{(x,v)}(TQ)$.

For clarity, let us examine these diffeomorphisms in local coordinates. Let $(\tilde{p}_i,\tilde{\alpha}_j, \ti{v}^k, \ti{x}^k)$ be canonical coordinates in $T^*TQ$ relative to standard ones $(\tilde{v}^k,\tilde{x}^k)$ on $TQ$ and let $(p_i,\alpha_j, v^k, x^k)$ be natural coordinates on  $  q^*T^*Q \oplus_{TQ} q^*T^*Q$. Then,
\begin{equation}\label{eq:psicoords}
\psi_\na^* \tilde{p}_i = p_i + \Gamma_{ij}^k v^j \alpha_k, \ \psi_\na^*\tilde{\alpha}_j = \alpha_j , \ \psi_\na^*\ti{v}^k = v^k, \ \psi_\na^*\ti{x}^k = x^k. 
\end{equation}

The next proposition shows the effect of using global split variables in the symplectic form and the underlying general form of Hamilton's equations.

\begin{proposition}  \label{mainproposition1}  (Symplectic structure in split variables)
\begin{enumerate}
 \item[$(i)$]
 The pullback $\theta_\na : = \ps^* \theta_{TQ}$ of canonical 1-form  $ \theta_{TQ}\in \Omega^1(T^*(TQ))$    to the   split cotangent bundle yields
\begin{eqnarray}  \theta_\na|_{(x,v,p,\alpha)}   &=& \pi_1^* \theta_Q|_{(x,p)} + \pi^*_2\langle \alpha, \Theta_\na|_{(x,v)}\rangle \label{eq:splitoneform}
\\ & \overset{loc}{\equiv}& p_i dx^i + \pu_a (dv^a + \G_{ib}^a v^b dx^i),\nonumber 
 \end{eqnarray}
 where $\pi_1(x,v,p,\alpha) = (x,p)$ and $\pi_2(x,v,p,\alpha)=(x,v)$ denote the natural projections.
 \item[$(ii)$]  The pullback $\Omega_\na : = \ps^* \Omega_{TQ}$ of the canonical symplectic form $\Omega_{TQ}$ on $T^*( TQ)$ yields 
\begin{eqnarray}  \label{symplecticstructure}
\Omega_{\na}|_{(x,v,p,\alpha)} &=& \pi_1^*\Omega_Q|_{(x,p)} + \langle \pi_2^* \Theta_\na|_{(x,v)}  \wedge \pi_3^*\Theta_{\bna}|_{(x,\alpha)} \rangle -\pi_0^*  \langle \pu, R v\rangle|_x \label{eq:split2form} \\
&\overset{loc}{\equiv}& dx^i \wedge dp_i + (dv^a + \G_{ib}^a v^b dx^i) \wedge (d\pu_a - \G_{ja}^c \pu_c dx^j) \nonumber \\ 
&& - \frac{1}{2} R_{ija}^b v^a \pu_b dx^i \wedge dx^j ,\nonumber
\end{eqnarray}
where $\pi_0(x,v,p,\alpha)= x$, $\pi_3(x,v,p,\alpha) = (x,\alpha)$ denote natural projections, $R\in \Omega^2(M, End(TQ))$ is the Riemannian curvature tensor of $\na$ and $\Theta_{\bna} \in \Omega^1(T^*Q, T^*Q)$ corresponds to the vertical projection relative to the dual connection\footnote{The Christoffel symbols of $\bna$ are minus the transpose of those of $\na$, $\bna_{\d_{x^i}}dx^j=-\Gamma_{ik}^j dx^k$.} $\bna$ on $T^*Q \to Q$.
\item[$(iii)$]  Given $H\in C^\infty( q^*T^*Q \oplus_{TQ} q^*T^*Q)$,    the Hamiltonian vector field $X_H$  is given in local coordinates by\\
$$ \hspace*{-6cm} \dot{x}^i  =  \d_{p_i}H \,\,\,\,\,\, $$
\begin{equation}   \label{mainequationsa} \dot{p}_i  =  -\d_{x^i}H + \Gamma_{ia}^b (v^a \d_{v^b}H-\a_b \d_{\a_a}H)
			+ R^b_{ija} v^a \alpha_b \dot{x}^j 
			\end{equation} 
\begin{equation*} \hspace*{-0.8cm}  \dot{v}^a +
      \Gamma^a_{ib} \dot{x}^i v^b   =     \d_{\a_a}H, \quad 
 \dot{\alpha}^a -
      \Gamma^b_{ia} \dot{x}^i \alpha_b  = -\d_{v^a}H.
\end{equation*}
\end{enumerate}
 \end{proposition}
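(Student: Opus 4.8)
The plan is to establish all three parts by pulling back along the explicit local-coordinate description of $\psi_\na$ given in \eqref{eq:psicoords}, then matching the results to the intrinsic expressions. First I would prove part $(i)$. Recall that the canonical one-form on any cotangent bundle $T^*(TQ)$ is $\theta_{TQ} = \tilde p_i\, d\tilde x^i + \tilde\alpha_j\, d\tilde v^j$ in the canonical coordinates $(\tilde p_i,\tilde\alpha_j,\tilde v^k,\tilde x^k)$. Using the pullback relations $\psi_\na^*\tilde p_i = p_i + \Gamma_{ij}^k v^j\alpha_k$, $\psi_\na^*\tilde\alpha_j = \alpha_j$, $\psi_\na^*\tilde v^k = v^k$, $\psi_\na^*\tilde x^k = x^k$, I would substitute directly:
\[
\theta_\na = (p_i + \Gamma_{ij}^k v^j \alpha_k)\, dx^i + \alpha_j\, dv^j = p_i\, dx^i + \alpha_a\,(dv^a + \Gamma_{ib}^a v^b\, dx^i),
\]
which is exactly the local form claimed. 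The intrinsic identification then follows by recognizing $p_i\, dx^i$ as $\pi_1^*\theta_Q$ and $\alpha_a(dv^a+\Gamma_{ib}^a v^b dx^i)$ as the pairing $\langle\alpha,\Theta_\na\rangle$, since $\Theta_\na$ is precisely the vertical-projection one-form whose local expression is $(dv^a+\Gamma_{ib}^a v^b dx^i)\otimes \partial_{v^a}$ (this is the connection form of the Ehresmann connection induced by $\na$, already fixed in the text).

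For part $(ii)$, I would simply take $\Omega_\na = -d\theta_\na$ and differentiate the local expression from $(i)$. The term $d(p_i\,dx^i) = dx^i\wedge dp_i$ gives $\pi_1^*\Omega_Q$. Differentiating $\alpha_a(dv^a+\Gamma_{ib}^a v^b dx^i)$ produces two kinds of terms: one grouping into the wedge $(dv^a+\Gamma_{ib}^a v^b dx^i)\wedge(d\alpha_a-\Gamma_{ja}^c\alpha_c dx^j)$, and a leftover coming from $d\Gamma_{ib}^a$, namely the expression involving $\partial_j\Gamma_{ib}^a$; the antisymmetrization in $i,j$ of the $\Gamma\,d\Gamma$ and $\Gamma\Gamma$ contributions assembles into the curvature combination $R^b_{ija}=\partial_i\Gamma^b_{ja}-\partial_j\Gamma^b_{ia}+\Gamma^b_{ic}\Gamma^c_{ja}-\Gamma^b_{jc}\Gamma^c_{ia}$. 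The main obstacle is this bookkeeping step: I must carefully collect the $\Gamma\,d\Gamma$ and $\Gamma\Gamma$ cross-terms and verify that exactly the curvature tensor emerges with the right index placement and the factor $\tfrac12$ (the $\tfrac12$ appears because $dx^i\wedge dx^j$ is already antisymmetric and double-counts). Once this is checked, I would read off the three intrinsic pieces: $\pi_1^*\Omega_Q$, the wedge term which I identify as $\langle\pi_2^*\Theta_\na\wedge\pi_3^*\Theta_{\bna}\rangle$ using that $\bna$ has Christoffel symbols $-\Gamma_{ja}^c$ so that $\Theta_{\bna}$ locally equals $d\alpha_a-\Gamma_{ja}^c\alpha_c dx^j$, and finally the curvature term $-\pi_0^*\langle\alpha,Rv\rangle$.

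For part $(iii)$, the cleanest route is to avoid re-deriving Hamilton's equations from scratch and instead work directly from the nondegenerate two-form of part $(ii)$. I would posit $X_H$ with unknown components along $\partial_{x^i},\partial_{p_i},\partial_{v^a},\partial_{\alpha_a}$ and impose the defining relation $\iota_{X_H}\Omega_\na = dH$. Contracting $X_H$ against each of the three blocks of $\Omega_\na$ and equating coefficients of $dx^i$, $dp_i$, $dv^a$, $d\alpha_a$ yields a linear system for the components; the $dx^i\wedge dp_i$ block immediately gives $\dot x^i=\partial_{p_i}H$, the mixed block gives the two covariant-transport equations $\dot v^a+\Gamma^a_{ib}\dot x^i v^b=\partial_{\alpha_a}H$ and $\dot\alpha_a-\Gamma^b_{ia}\dot x^i\alpha_b=-\partial_{v^a}H$, and collecting everything left over for $\dot p_i$ — including the curvature contribution from the last block of $\Omega_\na$ — produces \eqref{mainequationsa}. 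The only delicate point here is tracking how the off-diagonal $\Gamma$-terms in the symplectic form feed into the $\dot p_i$ equation; I expect these to recombine precisely into the displayed $\Gamma_{ia}^b(v^a\partial_{v^b}H-\alpha_b\partial_{\alpha_a}H)$ together with the curvature term $R^b_{ija}v^a\alpha_b\dot x^j$, consistent with parts $(i)$ and $(ii)$.
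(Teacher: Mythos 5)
Your proposal is correct and takes essentially the same approach as the paper: both compute $\psi_\na^*\theta_{TQ}$ in the coordinates of \eqref{eq:psicoords}, obtain the two-form as $-d\,\theta_\na$ with the curvature tensor assembling from the $\Gamma\,d\Gamma$ and $\Gamma\Gamma$ cross-terms, and then read off $(iii)$ directly from the split-variable symplectic form of $(ii)$ via $\iota_{X_H}\Omega_\na = dH$. No gaps to report.
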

 \begin{proof}
 Both the l.h.s. and the r.h.s. of equations \eqref{eq:splitoneform} and \eqref{eq:split2form} define global differential forms on $ q^*T^*Q \oplus_{TQ} q^*T^*Q$. To prove $(i)$ and $(ii)$ it is then enough to show that, when restricted to any coordinate chart, the corresponding local expressions of the l.h.s. and of the r.h.s. coincide. Now, the formulas after the $\equiv$ symbols evidently correspond to the local coordinate expressions for the r.h.s'. Hence, we only need to show that the local expressions of $\psi_\na^*\theta_{TQ}$ and $\psi_\na^*\Omega_{TQ}$ coincide with the given ones.
 Let us then choose coordinates for $T^*TQ$ and $ q^*T^*Q \oplus_{TQ} q^*T^*Q$ as in eq. \eqref{eq:psicoords}. We have that
 $
 \theta_{TQ} \equiv \tilde{p}_i d\tilde{x}^i + \tilde{\alpha}_i d\tilde{v}^i
 $
 and thus $(i)$ follows directly by computing the pullback  $\psi_\na^*\theta_{TQ}$ following the change of coordinates \eqref{eq:psicoords}. For $(ii)$, we observe that
 $$
 \psi_\na^*\Omega_{TQ}= \psi_\na^*(-d\, \theta_{TQ})=-d (\psi_\na^*\theta_{TQ}) =  -d\, \theta_\na,
 $$
 so that we need to apply $-d$ to the known local expression for $\theta_\na$. By direct computation using the following local expression for the curvature tensor
 $$ \langle dx^\ell, R(\d_{x^i},\d_{x^j})\d_{x^k}\rangle = R_{ijk}^\ell(x) = \d_{x^i} \G_{jk}^\ell - \d_{x^j} \G_{ik}^\ell + \G_{jk}^\tau \G_{i\tau}^\ell - \G_{ik}^\tau \G_{j\tau}^\ell,$$
 one obtains the desired equality. Finally, $(iii)$ is a straightforward consequence of $(ii)$. 
 \end{proof}
 
  \bigskip
\noindent We recognize that the l.h.s'. of the last two equations in $(iii)$ correspond to covariant derivatives.  
{\it The equations for the $\dot{p}_i$ are more intricate, but we will show below that they  simplify for   
spline  control problems with  cost functions of a special form. }

\subsubsection*{Optimal spline Hamiltonians} 
Let us now show how the split variables simplify the hamiltonian of our problem. Indeed, recalling $H_u$ defined in \eqref{eq:Hu}, then
$$H_{u,\na}:=\psi_\na^* H_u  = - C(U|_{(x,v)}) + \langle \alpha, u(x) \rangle + \langle p , v \rangle $$
holds globally in the split phase space $q^*T^*Q \oplus_{TQ} q^*T^*Q$.
We shall restrict ourselves to the case in which $$C(U_{(x,v)}) = c(g_x(u(x),u(x)))$$ is a (typically convex) function $c: \R_+ \to \R$ of the norm square of the control $u$. Cubic splines have cost functional \eqref{cubicsplines} and are thus a particular case of the above with $c$ being a linear function. Time-minimal splines are also a particular case with  $c \equiv -1$ a constant (recall that, in this case, $u$ is constraint by \eqref{timemin}).

It is now easy to find the optimal value $H_{*,\na}$  of $H_{u,\na}$:
\begin{equation}     \label{splineshamilt}
  H_{*,\na}=  \langle p,v \rangle +{\rm Leg(c)}(g^{-1} ( \a, \a)) ,
  \end{equation}
where $g^{-1}$ is the cometric, the optimal value of the control is
  \begin{equation*} 
  u_* = \underset{u: \ {\rm constraints}}{\rm argmax} \,\, [\, \langle \alpha,u(x) \rangle -  c(|u_x|^2)\, ] , 
\end{equation*}
and ${\rm Leg(c)}$ denotes the {\it Legendre-Fenchel  dual} of $c$ \cite{Rockafellar}:  
\begin{equation} \label{eq:Legendretr}  
c(g(u_*,u_*)) +  {\rm Leg(c)}( g^{-1} (\alpha,\alpha)) =  \langle \alpha\,,\,  u_*\rangle \, . 
\end{equation}
 
 We now show that a dramatic simplification in the $\dot{p}_i$ equation results from  the connection preserving the metric. 

\begin{proposition}  \label{mainproposition2}  Hamilton's equations for $H_{*,\na}$ in the case of cost functions of the form  $C = c(g(u,u)) $ can be intrinsically written as 
  \begin{equation}  \label{intrinsic}
  \dot{x}  =   v \,\,\,,\,\,\,
 \ti{\nabla}_{\dot{x}}p   =  -i_v \langle \alpha, R v\rangle  
 \,\,\,,\,\,\,
 \nabla_{\dot{x}}v     =   u_*    \,\,\,,\,\,\, 
 \ti{\nabla}_{\dot{x}}\a  =  - p.
 \end{equation}
 Locally, they read
 $$ \dot{x}^i     =     v^i   ,\
   (\tilde{\nabla}_{\dot{x}} p)_i   =   \alpha_b  R^b_{ijk} v^j  v^k ,\
  (\nabla_{\dot{x}}v)^a  =  u_*^a  ,\ (\ti{\nabla}_{\dot{x}} \a)_a  = - p_a   .
  $$
\end{proposition}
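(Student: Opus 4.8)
The plan is to substitute the optimal Hamiltonian $H_{*,\na} = \langle p,v\rangle + {\rm Leg(c)}(g^{-1}(\alpha,\alpha))$ of \eqref{splineshamilt} directly into the local Hamilton equations of Proposition \ref{mainproposition1}$(iii)$ and match each resulting equation with its intrinsic counterpart in \eqref{intrinsic}. Write $\Phi(x,\alpha):={\rm Leg(c)}(g^{jk}(x)\alpha_j\alpha_k)$, so that $H_{*,\na}=p_iv^i+\Phi$. Three of the four equations come for free. Since $\partial_{p_i}H=v^i$, the first equation reads $\dot x=v$. Since $\partial_{v^a}H=p_a$, the last equation of $(iii)$ reads $(\tilde\nabla_{\dot x}\alpha)_a=-p_a$, i.e. $\tilde\nabla_{\dot x}\alpha=-p$. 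For the $v$-equation I would invoke the Legendre--Fenchel duality recorded in \eqref{eq:Legendretr}: the gradient of the conjugate of $u\mapsto c(g(u,u))$ is its maximizer, so $\partial_{\alpha_a}\Phi=u_*^a$ and $(iii)$ gives $(\nabla_{\dot x}v)^a=u_*^a$, i.e. $\nabla_{\dot x}v=u_*$.

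The remaining $\dot p_i$ equation is where the work lies. Substituting $H_{*,\na}$ into \eqref{mainequationsa} and using $\dot x^j=v^j$, $\partial_{v^b}H=p_b$, $\partial_{\alpha_a}H=u_*^a$ gives
\begin{equation*}
\dot p_i = -\partial_{x^i}\Phi + \Gamma^b_{ia}(v^a p_b - \alpha_b u_*^a) + R^b_{ija}v^a\alpha_b v^j.
\end{equation*}
I would then move $\Gamma^b_{ia}v^a p_b$ to the left: using the symmetry of the Levi--Civita Christoffel symbols in their lower indices one checks $\dot p_i - \Gamma^b_{ia}v^a p_b=(\tilde\nabla_{\dot x}p)_i$, precisely the covariant derivative for the dual connection $\bna$. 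The pure curvature term already has the right shape: a trivial relabelling gives $R^b_{ija}v^a\alpha_b v^j=\alpha_b R^b_{ijk}v^jv^k$, and a short contraction using the antisymmetry of $R$ in its first two slots, together with the convention $R^b_{ijk}=\langle dx^b,R(\partial_{x^i},\partial_{x^j})\partial_{x^k}\rangle$ from the proof of Proposition \ref{mainproposition1}, identifies this with the $i$-component of $-i_v\langle\alpha,Rv\rangle$. Thus, up to the two residual terms $-\partial_{x^i}\Phi-\Gamma^b_{ia}\alpha_b u_*^a$, the equation is already the claimed $\tilde\nabla_{\dot x}p=-i_v\langle\alpha,Rv\rangle$.

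The crux -- and the step I expect to be the main obstacle, which the text itself flags as the ``dramatic simplification'' -- is to show that these residual terms cancel, i.e. that $\partial_{x^i}\Phi=-\Gamma^b_{ia}\alpha_b u_*^a$. Computing directly, $\partial_{x^i}\Phi={\rm Leg(c)}'\,(\partial_{x^i}g^{jk})\alpha_j\alpha_k$, while $u_*^a=\partial_{\alpha_a}\Phi=2\,{\rm Leg(c)}'\,g^{ab}\alpha_b$. After cancelling the common factor ${\rm Leg(c)}'$ and symmetrizing in $\alpha$, the identity reduces to
\begin{equation*}
\partial_{x^i}g^{jk}+\Gamma^j_{ia}g^{ak}+\Gamma^k_{ia}g^{aj}=0,
\end{equation*}
which is exactly the covariant constancy $\nabla g^{-1}=0$ of the cometric; this holds precisely because $\nabla$ is metric-compatible, and this is where the Levi--Civita hypothesis enters. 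Conceptually, $\Phi$ depends on $x$ only through the parallel tensor $g^{-1}$, so its horizontal derivative relative to $\bna$ vanishes, killing the residual terms. Assembling the four equations and reading the $\dot p$ one back intrinsically then yields \eqref{intrinsic}.

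One point I would flag for care is the Legendre step $\partial_{\alpha_a}\Phi=u_*^a$ in the constrained time-minimal case ($c\equiv-1$, $|u|\le A$), where the maximizer sits on the boundary $|u|=A$; there the identity is read off via the envelope (Danskin) theorem on the open region where $u_*$ is single-valued. Notably, the same metric-compatibility identity above performs the cancellation in both the $L^2$ and $L^\infty$ regimes, so no separate argument is needed for each cost.
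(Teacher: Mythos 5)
Your proposal is correct and follows essentially the same route as the paper's proof: substitute $H_{*,\na}$ into the local equations of Proposition \ref{mainproposition1}$(iii)$, obtain the $\dot{x}$, $\dot{v}$, $\dot{\alpha}$ equations directly (with the $u_*$ identification coming from differentiating the Legendre--Fenchel relation \eqref{eq:Legendretr}), and kill the residual terms in the $\dot{p}$ equation via metric compatibility $\partial_{x^i}g^{ab}=-\Gamma^a_{ic}g^{cb}-\Gamma^b_{ic}g^{ac}$, which is exactly the paper's identity $(\ast)$. Your extra remarks (symmetry of the Christoffel symbols when forming $\tilde\nabla_{\dot{x}}p$, the antisymmetry check for $-i_v\langle\alpha,Rv\rangle$, and the Danskin-type care in the constrained $L^\infty$ case) are sound refinements of details the paper leaves implicit.
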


 \begin{proof}
 From item $(iii)$ in Proposition \ref{mainproposition1} and the definition of $H_*\equiv H_{*,\na}$ given in \eqref{splineshamilt}, we immediately get
 $$
 \dot{x} = \d_{p}H_* = v \qquad \mbox{and}\qquad \bna_{\dot{x}} \alpha =    -\d_{v}H_* = - p.
 $$
 The equation for $\dot{v}$ reduces to
 $$ (\na_{\dot{x}}v)^i = \d_{\alpha_i} \left( {\rm Leg(c)}(g^{-1} ( \a, \a))\right).$$
 Deriving with respect to $\alpha_i$ both sides of the eq. \eqref{eq:Legendretr} defining ${\rm Leg(c)}$ one gets 
 $$ \d_{\alpha_i} \left( {\rm Leg(c)}(g^{-1} ( \a, \a))\right) = u_*^i$$
 as wanted. We are thus only left with the equation \eqref{mainequationsa} for $\dot{p}$. Transporting the term $ \Gamma_{ia}^b  v^a \d_{v^b}H_* =  \Gamma_{ia}^b v^a p_b$ to the l.h.s. to get a covariant derivative one obtains
 $$ (\bna_{\dot{x}}p)_i = - (\d_{x^i} H_* + \Gamma_{ia}^b \alpha_b \d_{\alpha_a} H_*) +   R^b_{ija} v^a \alpha_b \dot{x}^j.$$
 The proof will be finished when we show that the term between brackets in the r.h.s. vanishes. This, in turn, follows by virtue of the fact that $\nabla$ preserves the metric $g$, and hence $\bna$ preserves $g^{-1}\equiv (g^{ab}(x))$, so that 
 $$ (\ast) \ \ \d_{x^i} g^{ab} =  -  \Gamma^a_{ic} g^{cb} -  \Gamma^b_{ic} g^{ac}. $$
 Finally, this identity directly implies the desired vanishing:
 $$ \partial_{x^i} H_*+ \Gamma_{ia}^b \alpha_b \d_{\alpha_a} H_* = [{\rm Leg(c)}]' \ \d_{x^i} g^{ab} \a_a \a_b + 
 \Gamma_{ia}^b \alpha_b ( 2  [{\rm Leg(c)}]' g^{ac}\a_c) \overset{(\ast)}{=} 0. $$
\end{proof}

\subsection{Cubic and time minimal splines}\label{subsec:gensplinesspliteqs} 
Let us examine the particular cases mentioned in the introduction. For cubic splines, 
the cost functional is \eqref{cubicsplines} so 
$$  H_u(x,v,p,\alpha) = - \frac{\beta}{2} g(u,u) +   \langle \alpha , u \rangle  +  \langle p , v \rangle . $$
The advantage of the split variables  is now apparent: the optimal hamiltonian is immediately given by
\begin{equation}  \label{cubichamiltonian1}
H^{\rm cubic}:= H_{*,\na} = \frac{1}{2\beta} \, g^{-1} (\alpha , \alpha) +  \langle p , v \rangle \,\,,\,\, \,\,\,   u_* = \alpha^{\sharp}/\beta,
\end{equation}
where, for any covector $\alpha$, one defines $\alpha^\sharp$ by $g(\alpha^\sharp,v)=\alpha(v)$ for all vectors $v$.
Likewise,  the  optimal Hamiltonian for the time minimal problem with constraint \eqref{timemin}  is also easily shown to be
\begin{equation} \label{timeminimal1}
 H^{\rm tmin}:= H_{*,\na}   = -1 +  A \sqrt{ g^{-1} (\alpha , \alpha) } +  \langle p , v \rangle \,\,, \,\, u_* = A\, \alpha^{\sharp}/|\alpha^{\sharp}|.
\end{equation} 
The equations of motion in both cases are  (\ref{intrinsic})  with the corresponding  $u_*$  from (\ref{cubichamiltonian1}) and (\ref{timeminimal1}). 
For instance, taking $\beta=1$ in the cubic splines problem, then  $\nabla_{\dot x} \dot x = u_* = \alpha^\sharp$.  Deriving this equation covariantly two more times and using the equations of motion for $\alpha$ and $p$ we get \footnote{It is also useful to recall the identities $\nabla_{\dot{x}}(\a^\sharp)=(\bna_{\dot{x}}\a)^\sharp$ and $g(R(u,v)w,z)=g(R(w,z)u,v)$.}
\begin{equation}  \label{oldcubic} 
\nabla^{(3)}_{\dot x} \dot x = - R (\nabla_{\dot x} \dot x, \dot x) \dot x,
\end{equation}
recovering the equations found by  L. Noakes, G. Heinzinger and B. Paden \cite{Noakes1989},  and P. Crouch and F. S. Leite \cite{CrouchLeite1991}.

\subsubsection*{Landmark splines on $Q = (\R^d)^N$}   
  Before moving on to splines on spheres, we present some comments about landmark cometrics. For $N=1$ one has the euclidian metric on $\R^d$, for which  $L^2$  splines are  cubic polynomials on each coordinate.  The $L^{\infty}$ problem has been addressed   in \cite{CastroKoiller} for any value of $d$ (actually $d=2,3$ is enough).   The next simplest nontrivial case is $d=1, N=2$.
One observes that the underlying geodesic problem (i.e. when the control $u=0$) is integrable, with  Hamiltonian  
\begin{equation*}
 2H =  p_1^2 + p_2^2 + 2 G(x_1-x_2) \,p_1 p_2    \,\,.
 \end{equation*}
In the spline problem ($u\neq 0$) one also has invariance under translations on the line, so there will be a conserved momentum and it will be reducible to 3 degrees of freedom.  
Numerical  experiments for landmark splines are in order.  In ``Mario's formulas'',  partial derivatives are computed on cometric entries \cite{Micheli2012}. At every computation  step, done at the current landmark locations, 
  there is  only one matrix inversion,  of the  cometric matrix, which has a block structure.   For simulations we suggest  using the Cauchy kernel
$
G(x_1,x_2) =  1/(1+|x_1-x_2|^2)
$ that has a weaker decay at infinity than kernels involving the exponential.

\subsubsection*{Cubic splines on $Q=S^n$ and extrinsic vs intrinsic description}  \label{CrouchLeite2}
In this case, there is an alternative approach to finding the optimal hamiltonian which uses extrinsic variables coming from the embedding $S^n\subset \R^{n+1}$. We shall show below how to explicitly relate the two descriptions.

We first follow Dong-Eui Chang \cite{Chang2011}, fix a sphere $S^n(r)$ of radius $r$ and consider the following state equations in $\R^{2(n+1)}$ 
\begin{equation*}
\dot{x} = {\bf v}, \qquad  \dot{{\bf v}} = u - |{\bf v}|^2 \,x/r^2 , \qquad u  \perp x .
\end{equation*}
To avoid confusions with scalar velocities used later, we use {\bf boldface} for velocity \emph{vectors} from now on.  
The idea is that these equations have  $TS^n(r) \subset \R^{2(n+1)}$ as invariant submanifolds and induce the correct state equations \eqref{eq:generalstateeq} on the sphere. 

The natural coordinates on $\R^{2(n+1)}$ restrict to $TS^n(r)$ yielding (local, but almost global) coordinates that we call \emph{extrinsic variables} and denote by $(x,v)$.
Following the notation of section \ref{subsec:splitvar}, we consider the cotangent bundle $T^* (T \R^{n+1} ) =  \R^{4(n+1)}$ with coordinates  $(x,{\bf v}, \tilde{p},\, \tilde{\a})$ and canonical symplectic form 
\begin{equation} \label{canonical}
\Omega_{T \R^{n+1}} =    dx \wedge d\tilde{p} +d {\bf v}\wedge d\tilde{\a}.
\end{equation} 
In the case of cubic splines, the parametric Hamiltonian \eqref{eq:Hu} in the ambient $\R^{4(n+1)}$ is 
\begin{equation*}    
\hat{H}_u = - \beta ||u||^2/2 + \tilde{p} \cdot  {\bf v}    +  \tilde{\a} \cdot ( u - \frac{ |{\bf v}|^2 }{r^2} \, x),
\end{equation*}
 where the controls are restricted to the  tangent  planes:   $ u \perp x$. Notice that the ambient scalar product $\cdot$ allows us to identify vectors and covectors. Let us consider the projections
 \begin{equation}  \label{pvpar}  
\a^{\parallel} = \tilde{\a} - \langle \tilde{\a},x \rangle x /r^2 ,  \qquad \tilde{p}^{\parallel} = \tilde{p} - \langle \tilde{p},x \rangle x /r^2 
\end{equation}
onto the plane perpendicular to $x$, so that $(x,v,\tilde{p}^{\parallel},\a^{\parallel})$ define extrinsic variables for $T^*(TS^n(r))\subset \R^{4(n+1)}$. It is immediate to deduce that, upon restriction $H_u = \hat{H}_u|_{T^*(TS^n(r))}$,  the optimal control is
 $ u_* = \frac{1}{\beta} \a^{\parallel}\,\,\,$ 
and that the optimal Hamiltonian reads 
\begin{equation}  \label{direct}     
H_* = \frac{1}{2\beta}  |\a^{\parallel}|^2 + \tilde{p} \cdot  {\bf v}   - \, \frac{ \langle \tilde{\a} ,x \rangle }{r^2}   |{\bf v}|^2 = \frac{1}{2\beta}  |\a^{\parallel}|^2 + (\tilde{p}^{\parallel}  - \, \frac{\langle \tilde{\a} ,x \rangle}{r^2}\, {\bf v} )\, \cdot  \, {\bf v} .
\end{equation}
The relation between the extrinsic variables $(x,v,\tilde{p}^{\parallel},\a^{\parallel})$ and the split variables $(x,v,p,\a)$ of section \ref{subsec:splitvar} is given by the following: 
\begin{proposition}  \label{splittingsphere}  The split variables \eqref{eq:generalsplitvars} for  $T^*(TS^n)$  are given by
\begin{eqnarray*}  
\alpha & = &  \a^{\parallel} = \tilde{\a} - \langle \tilde{\a},x \rangle x /r^2  \in T^*_x S^n(r)  \,\,\, (\equiv T_x S^n(r) ) \\ 
  p & = & \tilde{p}^{\parallel}  - \, \frac{ \langle \tilde{\a} ,x \rangle}{r^2}\, {\bf v}  \,\,\, \in  T^*_x S^n(r)  \,\,\, (\equiv T_x S^n(r)\,).
\end{eqnarray*}
\end{proposition}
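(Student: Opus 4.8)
The plan is to use the intrinsic characterization of the split variables $(p,\a)$ as the two pieces of the covector $P=\ps(p\oplus\a)$ obtained by pairing with horizontal and vertical lifts. Setting $a=0$ and then $w=0$ in the defining relation
$$\langle \ps(p\oplus\a)|_{(x,v)},\, \ph(w\oplus a)|_{(x,v)}\rangle=\langle p,w\rangle+\langle\a,a\rangle$$
and recalling $\ph(w\oplus a)=hor_\na(w)+a^{vert}$, one obtains the two identities $\langle P, hor_\na(w)|_{(x,v)}\rangle=\langle p,w\rangle$ and $\langle P, a^{vert}|_{(x,v)}\rangle=\langle\a,a\rangle$, valid for all $w,a\in T_xS^n(r)$. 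Here $P\in T^*_{(x,v)}(TS^n(r))$ is the restriction of the ambient covector $(\tilde p,\tilde\a)\in T^*_{(x,v)}(T\R^{n+1})$, so that $\langle P,(\delta x,\delta v)\rangle=\tilde p\cdot\delta x+\tilde\a\cdot\delta v$ for every $(\delta x,\delta v)\in T_{(x,v)}(TS^n(r))\subset\R^{n+1}\times\R^{n+1}$. Thus it suffices to compute the two lifts explicitly in the extrinsic coordinates and read off $p$ and $\a$.

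First I would compute the vertical lift, which is immediate: $a^{vert}|_{(x,v)}=\frac{d}{dt}|_{t=0}(x,v+ta)=(0,a)$. For the horizontal lift I would use the extrinsic description of the Levi-Civita connection on $S^n(r)$, namely $\na_w v=P_x(D_w v)$ with $D$ the ambient derivative and $P_x(\xi)=\xi-(\xi\cdot x)x/r^2$ the orthogonal projection onto $T_xS^n(r)$. A curve $(x(t),v(t))$ in $TS^n(r)$ is horizontal exactly when $\na_{\dot x}v=0$; differentiating the defining constraint $x\cdot v=0$ yields $x\cdot\dot v=-\dot x\cdot v$, so the parallel condition $\dot v-(\dot v\cdot x)x/r^2=0$ becomes $\dot v=-(\dot x\cdot v)x/r^2$. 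Hence, for $w=\dot x\in T_xS^n(r)$,
$$hor_\na(w)|_{(x,v)}=\Big(w,\,-\frac{(w\cdot v)}{r^2}\,x\Big)\in\R^{n+1}\times\R^{n+1},$$
and one checks directly that both this vector and $(0,a)$ satisfy the linearized constraints $x\cdot\delta x=0$, $\delta x\cdot v+x\cdot\delta v=0$ defining $T_{(x,v)}(TS^n(r))$, so the pairing with $P$ is well defined.

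It then only remains to substitute into the two pairing identities. For $\a$ I compute $\langle P, a^{vert}\rangle=\tilde\a\cdot a$, and since this must equal $\a\cdot a$ for all tangent $a$, the covector $\a$ is the tangential projection of $\tilde\a$, that is $\a=\tilde\a-\langle\tilde\a,x\rangle x/r^2=\a^\parallel$. For $p$ I compute
$$\langle P, hor_\na(w)\rangle=\tilde p\cdot w-\frac{\langle\tilde\a,x\rangle}{r^2}(w\cdot v)=\Big(\tilde p-\frac{\langle\tilde\a,x\rangle}{r^2}\,{\bf v}\Big)\cdot w,$$
which must equal $p\cdot w$ for all $w\in T_xS^n(r)$; projecting tangentially and noting that ${\bf v}$ is already tangent gives $p=\tilde p^{\parallel}-\frac{\langle\tilde\a,x\rangle}{r^2}{\bf v}$, as claimed. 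The only genuinely delicate step is obtaining the horizontal lift: one must express the Levi-Civita connection extrinsically and, crucially, use the differentiated constraint $x\cdot\dot v=-\dot x\cdot v$ so that the normal component of $\dot v$ is correctly accounted for. Everything else reduces to a direct pairing computation, and the appearance of the same $\langle\tilde\a,x\rangle {\bf v}/r^2$ correction in $p$ is exactly what reconciles the extrinsic Hamiltonian \eqref{direct} with the split-variable expression \eqref{cubichamiltonian1}.
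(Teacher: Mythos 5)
Your proof is correct, but it cannot be compared line-by-line with the paper's, for the simple reason that the paper states Proposition \ref{splittingsphere} with no proof at all: the only justification in the text is the a posteriori remark that substituting these expressions into the extrinsic Hamiltonian \eqref{direct} reproduces the general split-variable Hamiltonian \eqref{cubichamiltonian1}, which is a consistency check rather than a proof (matching one scalar function does not determine a change of variables). Your argument supplies exactly the missing verification, and it is the natural one within the paper's framework: you invoke the defining duality $\langle \psi_\nabla(p\oplus\alpha),\phi_\nabla(w\oplus a)\rangle=\langle p,w\rangle+\langle\alpha,a\rangle$ from section \ref{subsec:splitvar}, compute the two lifts extrinsically --- the vertical lift $(0,a)$ and the horizontal lift $\bigl(w,-(w\cdot v)x/r^2\bigr)$, the latter obtained from $\nabla_{\dot x}v=\dot v-(\dot v\cdot x)x/r^2$ together with the differentiated constraint $x\cdot\dot v=-\dot x\cdot v$, which is indeed the one delicate step and agrees with the covariant-derivative formula the paper records later for curves in $TS^n(r)$ --- and then read off $p$ and $\alpha$ by pairing. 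One point you could make explicit to round this off: the ambient pair $(\tilde p,\tilde\alpha)$ representing $P$ is only determined up to the annihilator of $T_{(x,v)}(TS^n(r))$, which is spanned by $(x,0)$ and $({\bf v},x)$; a one-line check shows that your final expressions $\alpha=\alpha^{\parallel}$ and $p=\tilde p^{\parallel}-\langle\tilde\alpha,x\rangle{\bf v}/r^2$ are unchanged under adding multiples of these, so they genuinely depend only on the covector $P$, as the proposition implicitly asserts.
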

Of course, expressing the Hamiltonian \eqref{direct} in terms of the split variables we get
\begin{equation*}  
H_{*,\na} =  \frac{1}{2\beta}  |\alpha|^2 +  \langle p, {\bf v} \rangle ,
\end{equation*}
 which is the general optimal hamiltonian \eqref{cubichamiltonian1}. We stress the evident simplification operated on the Hamiltonian \eqref{direct} when passing to split variables.

\subsubsection*{Hamiltonian equations for cubic splines in $S^n$ and Crouch-Leite equations} To write down the equations of motion \eqref{intrinsic} in this particular case, let us first notice that for a curve $(x(t),w(t)) \in TS^n(r)$ 
described in extrinsic variables, we have 
$$ \na_{\dot{x}} w = \dot{w} + (w\cdot \dot{x}) x.$$
Indeed, the above implies $(\na_{\dot{x}} w) \cdot x =0$ by virtue of $w\cdot x = 0$, so that the covariant derivative remains tangent to the sphere.
Secondly, the curvature tensor of the sphere can be expressed in terms of the ambient inner product as
\begin{equation*}  
R(X,Y) Z =   (Y \cdot Z)X - (X \cdot Z)Y.
\end{equation*}  
Introducing the following change of notation 
\begin{equation*}  x = x_o\,\, ,\,\,\, {\bf v} = x_1\,\,\,,\,\,\, \alpha =    \a^{\parallel} = x_2\,\,\,,\,\,\,  p = p^{\parallel}  - \, \frac{\langle\tilde{\a} ,x \rangle }{r^2}\,{\bf v } =  - x_3
\end{equation*}
and recalling $u_*=\alpha^\sharp(\equiv \alpha)$ in the cubic spline case, it immediately follows that eqs. \eqref{intrinsic} yield:
\begin{eqnarray}   \label{CLn}
\dot{x}_o & = &   x_1  \,\,,\,\,\,\,\,\,  \dot{x}_1 =  x_2 - |x_1|^2  x_o   \nonumber  \\
\dot{x}_2 & = &   x_3 - (x_2 \cdot x_1) x_o  \\
\dot{x}_3 & = &  -(x_3 \cdot x_1) x_o + (x_2 \cdot x_1) x_1 - |x_1|^2\, x_2      .      \nonumber
\end{eqnarray}
Notice that, by the previous general results, the above system automatically implies
the well known nonlinear equation \eqref{oldcubic} for cubic splines.

\begin{remark} The above system of equations reproduces the system derived by Crouch and Leite  \cite{CrouchLeite1991, CrouchLeite1995} for cubic splines in the case of the $(n=2)$-sphere. Notice that our results imply, in particular, that this system is \emph{Hamiltonian} for any $n$. We shall come back to these equations for $S^2$ in the next section.
\end{remark}

\section{Dynamical analysis of splines on $S^2$ } \label{observations} 
In this section, we want to analyze the solutions of the hamiltonian equations \eqref{intrinsic} coming from the PMP in the particular case of $Q=S^2$. In this case, the $SO(3)$ symmetry plays an important role: we can perform symplectic reduction to decrease the dimension of the system. In order to simplify the reduction-reconstruction procedure, we will use a description of non-zero tangent vectors 
$$
TS^2 - 0 \simeq \R_+ \times SO(3) 
$$
which is based on the Gauss map for a convex hypersurface\footnote{The procedure could be generalized for convex n-dimensional hypersurfaces in $\Re^{n+1}$.}  and detailed in Appendix \ref{convexsurfaces}.   
In section \ref{CrouchLeite1} the dictionary between the above variables and the general split ones of Proposition \ref{mainproposition2} is described.   We also discuss the artificial singularity at $v=0$ introduced  by the above identification.

\subsection{Hamiltonian equations on $T^*(TS^2-0)$}  Let us consider a $2$-sphere of radius $r$ and take $M:=TS^2-0$ to be the manifold given by all non-zero tangent vectors to the sphere. Following appendix \ref{convexsurfaces}, the map
\begin{equation}\label{eq:TS2SO3}
\begin{split} 
\R_+ \times SO(3) & \to  M= TS^2-0\\  
(v,R) & \mapsto (x=R {\bf e}_3, {\bf v}= vR\, {\bf e}_1),
\end{split}
\end{equation}
where $\{{\bf e}_1,{\bf e}_2,{\bf e}_3\}\in \R^3$ denote the standard basis vectors, is a diffeomorphism (c.f. \eqref{Gauss}). (One should not confuse the scalar $v\in \R_+$ with notation previously used for vectors.) Since the underlying surface $\Sigma = S^2$ is a sphere, this diffeomorphism preserves the natural $SO(3)$-actions (on $M$ is given by the tangent lift of the action by rotations on $S^2$).  Following equation \eqref{state} in appendix \ref{convexsurfaces}, the state equations
(\ref{stateequation})
  for curves in $TS^2$ with non-zero velocity (i.e. lying in $M$) read
$$ \dot{v} = u_1 \qquad \mbox{and} \qquad \dot{R} = R X, $$
with
\begin{equation} \label{statesphere}  
X=X(v,u_2) = \left( \begin{array}{ccc}   0 & -u_2/v  & v/r \\  u_2/v &  0 & 0 \\ -v/r & 0  & 0  \end{array}      \right)   
\end{equation}
and $(u_1,u_2)\in \R^2$ being the controls.   Note that $u_1$ represents the tangential acceleration and  $u_2 = v^2 \kappa_g$,   where $\kappa_g$ is the geodesic curvature}.
The skew-symmetric matrix $X \in so(3)$ can be conveniently represented as  
\begin{equation}  \label{Omega}
\Omega = \Omega(X) =  ( 0\,,\, v/r\,,\, u_2/v )\in \R^3 \,\, .
\end{equation}
We recall that, for cubic splines, the cost function is  
\begin{equation}  \label{costcubic}  
C =  \frac{1}{2} \beta \, (u_1^2+u_2^2)
\end{equation} 
while for the time-minimal problem: $C=1$ and   $u = (u_1,u_2)$ is constraint by
\begin{equation} 
u_1^2+u_2^2 \leq  A^2 \,\,.
\end{equation}

\subsubsection*{Applying Pontryagin's principle} The cotangent bundle of the state space $M$ is 
$$T^*M= T^*\R_+ \times T^*SO(3) \simeq T^*\R_+ \times (SO(3)\times \R^3),$$
where we used the standard left trivialization of the cotangent bundle of $SO(3)$.
We denote by $a\in \R$ the conjugate to $v\in \R_+$ and  $(R,M_1,M_2,M_3)\in SO(3)\times \R^3$ the left trivialized covectors on the rotation group.
The parametric hamiltonian \eqref{eq:Hu} yields in this case
\begin{equation*}  
H_u = -C + a \ u_1 + \Omega \cdot {\bf M} =  - C + a \, u_1 + M_2\, v/r  + M_3 u_2/v  \,\,. 
\end{equation*} 
     Now, finding the optimal  Hamiltonian is a trivial task. For cubic  splines 
     \begin{equation}   \label{optimizers}
     u_1^* = a/\beta,\, u_2^* = M_3/(\beta v)  
     \end{equation}  so that
\begin{equation} \label{cubic}
  H_* =  \frac{1}{2\beta}\, \left( a^2  +    (M_3/v)^2  \right) +  M_2\, v/r   \,\,  .
\end{equation}
For time minimal splines, 
\begin{equation} 
(u_1^*,u_2^*) = \frac{A}{\sqrt{a^2 + M_3^2/v^2}}\, (a,  \frac{M_3}{v} )\,\,\,,\,\,\,  H_* =   A \sqrt{a^2 + M_3^2/v^2} + M_2\, v/r.
\end{equation}

\subsubsection*{Hamiltonian equations for reduction-reconstruction}
 The  symplectic structure on $T^*M\simeq T^*\R_+ \times (SO(3)\times \R^3)$ is the product of the canonical one on the first factor and the very well known one on the second factor (e.g. from the rigid body problem). It is then straightforward to derive the hamiltonian equations coming from $H_*$. Moreover, we observe that $H_*$ does not depend on $R$ so that it descends to a reduced hamiltonian function on 
$$
M_{red} = T^*M/SO(3) = T^*\R_+ \times \R^3.
$$
The induced Poisson brackets on $M_{red}$ are also well known 
$$ \{ v, a\} = 1, \ \{ M_i, M_j\} = \epsilon_{ijk} M_k.$$ 
The equations on $T^*M$ can be thus split into the reconstruction equations for $R(t)\in SO(3)$,
\begin{equation}\label{eq:recon}
 \dot{R} = R \ X_*, \ \ X_* = X(v(t),u_2^*(t))
\end{equation}
and the reduced equations for $(v,a,{\bf M})$:
\begin{equation} \label{optred}   
\dot{v} = \partial H_*/\partial a\,,\,  \dot{a} =  - \partial H_*/\partial v\,\, ,\,\,\, \dot{{\bf M}} = {\bf M}\times ({\rm grad}_{{\bf M}} H_*) \,\,.
\end{equation}
The function  
\begin{equation*}  M_1^2 + M_2^2 + M_3^2 = \mu^2 \,\, .
\end{equation*}
is a Casimir and restricts the dynamics of ${\bf M}$ to a momentum sphere.

For the rest of this section, we shall concentrate on the case of cubic splines.   In this case, $H_*$ is given  by \eqref{cubic} and, then, the reduced  equations \eqref{optred} read 
\begin{eqnarray}
\dot{v}   =   a/\beta, \ \dot{a}  =  - M_2/r  + M_3^2/(\beta \,v^3) , \
 \dot{M}   =  M \times (\, 0, \, v/r ,\, M_3/(\beta v^2) \, )    
\end{eqnarray}
  or, more explicitly, 
\begin{eqnarray}  \label{moreexpli}
\dot{v} & = & a/\beta\,\,\,,\,\,\, \dot{a} = - M_2/r  + M_3^2/(\beta \,v^3) \nonumber \\
  \dot{M}_1 & = & M_2 M_3/ (\beta v^2) -  M_3 v/r \nonumber  \\ 
   \dot{M}_2  & =  & - M_1 M_3/ (\beta v^2)  \label{reducedexplicit}\\ 
  \dot{M}_3  &= &   M_1  v/r. \nonumber
 \end{eqnarray}
  The study of time minimal case on $S^2$ will be submitted elsewhere \cite{KoillerStuchi}.

 \subsection{Dynamics of cubic splines from reduced system's fixed points}
 
 \subsubsection*{Equators: linearly accelerating geodesics } \label{equatorsaccelerating} The simplest fixed points for eqs. \eqref{reducedexplicit} in the virtual momentum sphere 
 correspond to the values  $M_1=M_3=0,  M_2=  \mu$ (we allow  $\mu$  positive or negative). Then, so to speak, the `poles' on the virtual momentum sphere are in the second coordinate $M_2$.  The variables $a$ and $v$ follow, respectively, a linear and a quadratic function of time:
 \begin{equation*}  a(t) = -  (\mu/r) \, (t -t_o) + a_o\,\,\,,\,\,\,  v(t) =  (\mu/2 r \beta) (t -t_o)^2 +  (a_o/\beta) (t-t_o) + v_o  \,\,.\end{equation*}
Since
$ \Omega = (0, v(t)/r , 0)
$
it is easy to reconstruct $R$ via \eqref{eq:recon}.  
In fact, setting  $R(t_o)= I$,   in the $x_1-x_3$ plane 
there is a  family of trajectories passsing at $ t=t_o$  trough the north pole of  the physical sphere (radius $r$):   
\begin{equation*}  \gamma(t)   =    r ( \sin(\theta(t)), 0 ,  \cos(\theta(t)))\in S^2 
\end{equation*}
with 
\begin{equation*}  \theta(t)   =   \mu/(6 r \beta) (t -t_o)^3 +   a_o/(2\beta) (t-t_o)^2 + v_o (t-t_o) +  \theta_o
\end{equation*} 
which honors the name ``cubic'' splines.
 
\subsubsection*{More fixed points of the reduced system}   
Fixed points can be parametrized by $v\in\R_+$ since stationary points of \eqref{reducedexplicit} must satisfy  $a=0, \,  M_2 =  r\, M_3^2/(\beta \,v^3)   $ and  $M $ parallel to $  (\, 0, \, v/r ,\, M_3/(\beta v^2) \, ) $.     A simple algebraic manipulation yields 
\begin{proposition}
For each  $v > 0$   there are two equilibria with  $\mu = \sqrt{2}\,\,\beta  v^3/r$ and
\begin{equation}  \label{fixed1} 
  a = 0,\,\,\,  M_1 = 0,\,\,\, M_2 = \beta  \frac{v^3}{r},\,\,\,  M_3 =  \pm \beta  \frac{v^3}{r}
\end{equation}
\end{proposition}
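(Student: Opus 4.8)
The plan is to find the equilibria of the reduced system \eqref{reducedexplicit} directly, by setting all five time derivatives to zero and solving the resulting algebraic system. I would begin with the two equations that are easiest to invert. From $\dot v = a/\beta = 0$ I immediately read off $a = 0$. From $\dot M_3 = M_1 v/r = 0$, and using the standing assumption $v > 0$, I conclude $M_1 = 0$; note that this simultaneously forces $\dot M_2 = -M_1 M_3/(\beta v^2) = 0$, so the $\dot M_2$ equation imposes no further condition and can be set aside. At this stage only the two relations $\dot M_1 = 0$ and $\dot a = 0$ remain to be exploited.

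Next I would analyze $\dot M_1 = M_2 M_3/(\beta v^2) - M_3 v/r = M_3\,\bigl(M_2/(\beta v^2) - v/r\bigr) = 0$. This factors into two branches. On the branch $M_3 = 0$, the condition $\dot a = -M_2/r + M_3^2/(\beta v^3)=0$ forces $M_2 = 0$ as well, so the entire momentum vanishes and we land on the degenerate zero-momentum locus; this is precisely the equator/partial-equilibrium situation already discussed in the text and not a genuine fixed point of the full reduced system, so I would record it and discard it. On the generic branch $M_3 \neq 0$ I may divide by $M_3$ to obtain $M_2 = \beta v^3/r$, which is the claimed value of $M_2$ in \eqref{fixed1}.

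It then remains to pin down $M_3$ using the last nontrivial relation $\dot a = 0$, i.e. $M_2/r = M_3^2/(\beta v^3)$. Substituting $M_2 = \beta v^3/r$ gives $M_3^2 = \beta v^3 M_2/r = \beta^2 v^6/r^2$, hence $M_3 = \pm\,\beta v^3/r$. This produces exactly two equilibria for each fixed $v > 0$, matching \eqref{fixed1}. Finally I would evaluate the Casimir to obtain $\mu$: since $M_1 = 0$ and $M_2^2 = M_3^2 = (\beta v^3/r)^2$, we get $\mu^2 = M_1^2 + M_2^2 + M_3^2 = 2(\beta v^3/r)^2$, so that $\mu = \sqrt{2}\,\beta v^3/r$, as asserted.

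I do not expect any serious obstacle here, as the argument is elementary algebra once the fixed-point equations are written out. The only point requiring a little care is the case distinction on $M_3$: one must verify that the $M_3 = 0$ branch yields only the degenerate zero-momentum solution (already accounted for separately) and confirm that both sign choices $M_3 = \pm\,\beta v^3/r$ give admissible equilibria lying on the momentum sphere of radius $\mu$. With that branch analysis in place, the proposition follows.
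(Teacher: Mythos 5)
Your main computation is correct and is essentially the paper's own argument: the paper records the fixed-point conditions $a=0$, $M_2 = r\,M_3^2/(\beta v^3)$, and $M$ parallel to $(0,\,v/r,\,M_3/(\beta v^2))$, and leaves the ``simple algebraic manipulation'' to the reader; your component-wise solution of \eqref{reducedexplicit} fills in exactly that algebra, producing $M_2=\beta v^3/r$, then $M_3=\pm\beta v^3/r$ from $\dot a=0$, and $\mu=\sqrt{2}\,\beta v^3/r$ from the Casimir.

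One aside in your branch analysis is wrong, though it does not invalidate the proposition. On the branch $M_3=0$ you correctly conclude $M_1=M_2=M_3=0$ and $a=0$, but this locus is \emph{not} ``the equator/partial-equilibrium situation,'' and it \emph{is} a genuine fixed point of the full reduced system: with $M=0$ and $a=0$, all five equations of \eqref{reducedexplicit} vanish identically for every $v>0$. These zero-momentum equilibria have $\mu=0$ and reconstruct to great circles traversed at constant speed, since both optimal controls $u_1^*=a/\beta$ and $u_2^*=M_3/(\beta v)$ vanish — i.e.\ trivial (geodesic) splines. The equators discussed in the text are a different family: they have $M_1=M_3=0$ but $M_2=\pm\mu\neq 0$, they are only ``partial'' equilibria (the $M$-equations are stationary while $a$ and $v$ evolve linearly and quadratically in time), and they reconstruct to equators run cubically. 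Discarding the $M_3=0$ branch is still legitimate here, but the correct justification is that it yields $\mu=0$ rather than the claimed $\mu=\sqrt{2}\,\beta v^3/r$ — not that it fails to be an equilibrium of the reduced system.
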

These fixed points correspond to  {\it relative equilibria}  in the unreduced system.   
From \eqref{optimizers}  we have $ u_2^* =  M_3/(\beta v)$ while from the general state equation on $TS^2$ one deduces $u_2^*= \kappa_g \, v^2$ where $\kappa_g$ denotes the geodesic curvature of the underlying curve $\gamma(t)\in S^2$ (c.f. appendix \ref{convexsurfaces}). Since
 $M_3 =    \pm \beta  \frac{v^3}{r} $ ,  we get  
\begin{equation*}     
 |\kappa_g| =  \frac{1}{r}  \,\,.
\end{equation*}
Recall that  on a sphere of radius $r$, the parallel of latitude $\theta$ has geodesic curvature   $ \kappa_g = \tan \theta/r$ and thus $ \theta =   \pi/4. $  Moreover, we observe that 

\begin{proposition} \label{figeight}  (Figure eights.) The reconstructed curves in $S^2$ with $R(0)=I$,  corresponding to the two equilibria parametrized by $v>0$ as above,  are
 two  {\it orthogonal (touching)  circles}  making a  $\textup{45}^\circ$ angle  with the equatorial plane. They are given by
 \begin{equation} \label{twin}
 \gamma(t)   =  r \left( \frac{\sqrt{2}}{2}\, \sin\theta ,\, \pm  \frac{1}{2}(1-\cos\theta)    ,\,  \frac{1}{2} (1+ \cos\theta) \right) \quad \mbox{and} \quad \theta =  \sqrt{2} \,\frac{v}{r} \,t \,\,.
 \end{equation}
\end{proposition}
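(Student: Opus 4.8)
The plan is to exploit the fact that at a fixed point of the reduced system the reconstruction equation collapses to a single matrix exponential. First I would observe that, since \eqref{fixed1} is an equilibrium of the reduced equations \eqref{reducedexplicit}, all of $v,a,{\bf M}$ stay constant along the corresponding unreduced trajectory; in particular $v$ is constant and, from \eqref{optimizers}, the optimal control $u_2^* = M_3/(\beta v) = \pm v^2/r$ is constant as well. Consequently the reconstruction matrix $X_* = X(v,u_2^*)$ of \eqref{statesphere} is a \emph{constant} element of $so(3)$, so the reconstruction equation \eqref{eq:recon}, $\dot R = R X_*$ with $R(0)=I$, integrates immediately to the one-parameter subgroup $R(t)=\exp(t X_*)$. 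The reconstructed curve is then $\gamma(t)=r\,x(t)=r\,R(t){\bf e}_3$ by the Gauss-map identification \eqref{eq:TS2SO3}.

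Next I would identify the rotation axis and angular speed. Using the vector representation \eqref{Omega} together with $u_2^*/v = \pm v/r$, the matrix $X_*$ corresponds to $\Omega = (0,\,v/r,\,\pm v/r)$, of magnitude $|\Omega|=\sqrt{2}\,v/r$. Hence $R(t)$ is a uniform rotation about the fixed axis $(0,1,\pm 1)/\sqrt{2}$ through the angle $\theta(t)=|\Omega|\,t=\sqrt{2}\,(v/r)\,t$, which is exactly the parametrization appearing in \eqref{twin}.

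The core computation is to evaluate $R(t){\bf e}_3$ explicitly via Rodrigues' formula $\exp(t\hat\Omega)=I+\frac{\sin\theta}{|\Omega|}\hat\Omega+\frac{1-\cos\theta}{|\Omega|^2}\hat\Omega^2$. Applying this to ${\bf e}_3$ requires only the two auxiliary cross products $\hat\Omega\,{\bf e}_3=\Omega\times{\bf e}_3=(v/r,0,0)$ and $\hat\Omega^2\,{\bf e}_3=(0,\pm(v/r)^2,-(v/r)^2)$; substituting and simplifying with $|\Omega|^2=2(v/r)^2$ produces precisely the three component expressions in \eqref{twin}, the sign of $M_3$ controlling the sign of the second coordinate. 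This step is routine but is the computational heart of the argument.

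Finally, I would read off the geometric description. Each curve is the orbit of ${\bf e}_3$ under a fixed one-parameter rotation group and is therefore a circle on the sphere; since the normal $(0,1,\pm 1)/\sqrt{2}$ to the circle's plane makes an angle $\arccos(1/\sqrt{2})=45^\circ$ with the normal ${\bf e}_3$ of the equatorial plane $\{x_3=0\}$, the circle's plane meets the equatorial plane at $45^\circ$ (consistent with $|\kappa_g|=1/r$). The two axes $(0,1,1)/\sqrt{2}$ and $(0,1,-1)/\sqrt{2}$ are orthogonal, while both circles pass through ${\bf e}_3$ at $t=0$ with the common initial tangent direction $(1,0,0)$ there, so they touch at that point and curve into opposite hemispheres ($\pm x_2$), yielding the figure-eight. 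I do not anticipate a genuine obstacle: the only care needed is in the bookkeeping of signs and in correctly reading ``orthogonal (touching) circles'' as orthogonality of the rotation axes together with tangency at the shared point ${\bf e}_3$, rather than orthogonality of the curves themselves.
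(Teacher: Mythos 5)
Your proof is correct and takes essentially the same route as the paper's: both observe that at the equilibrium the control $u_2^*=\pm v^2/r$ and hence $X_*$ are constant, so $R(t)$ is a steady rotation with angular speed $\sqrt{2}\,v/r$ about the axis $(0,1,\pm 1)/\sqrt{2}$, and both then obtain \eqref{twin} from Rodrigues' axis--angle formula. The only cosmetic difference is that the paper writes out the full axis--angle rotation matrix and reads off its third column, whereas you apply the exponential directly to ${\bf e}_3$ via cross products --- the identical computation.
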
 
 \medskip

\begin{proof} Since 
 $ u_2^* =  \frac{M_3}{\beta v}\,\,,\,\,\, M_3 = \pm \frac{\beta v^3}{r}$ it follows that   $u_2^*  = \pm  \frac{v^2}{r}  $ and 
 $$ 
 \dot{R} =  R X_* \quad \mbox{with} \quad X_* = \left( \begin{array}{ccc}   0 &   \mp v/r   & v/r \\  \pm v/r &  0 & 0 \\ -v/r & 0  & 0  \end{array}      \right)  
 $$
 So we have steady rotations  with angular velocity $\omega =  \sqrt{2} \,v/r $ about 
\begin{equation}  
(u_x,u_y,u_z) = (0\, ,\, \frac{\sqrt{2}}{2} \,,\, \pm  \frac{\sqrt{2}}{2} ).   
\end{equation}
 Recall that for an unit vector $(u_x,u_y,u_z)$    the rotation matrix  $R(\theta)$  with $R(0) = I$ is given by
 {\small
$$\left[  \begin{array}{lll}
 \cos\theta + u_x^2 (1-\cos\theta) & u_x u_y  (1-\cos\theta) - u_z \sin\theta  & u_x u_z  (1-\cos\theta) + u_y \sin\theta   \\
u_x u_y  (1-\cos\theta) - u_z \sin\theta & \cos\theta + u_y^2 (1-\cos\theta) &
u_z u_y  (1-\cos\theta) - u_x \sin\theta \\
u_z u_x  (1-\cos\theta) - u_y \sin\theta & u_z u_y  (1-\cos\theta) + u_x \sin\theta &  \cos\theta + u_z^2 (1-\cos\theta)
\end{array}
\right]
$$ 
}
Equations (\ref{twin}) come from the third column of $R(\theta)$.  \end{proof}
  In  hindsight, we could  allow  $v<0$ in  (\ref{twin}), so we can describe both  twin circles in  both directions.  We have therefore {\it four} solutions,  each twin pair starting at the north pole   $(0,0,r)$    with velocity
  vector $(v,0,0)$.

  \subsubsection*{Discrete symmetries}  They  are in correspondence with expected geometric symmetries in the full system (in $S^2$).  i) Reflecting  a solution curve  $\gamma(t)$ over the equator that it is tangent 
  at a given point. Hence, given a solution, one gets infinitely many others (but two successive reflections correspond to the action of an $SO(3)$ element on the original curve).  ii)  Velocity reversal\footnote{It suggests that a double covering may be lurking around (perhaps
  $S^3 \to SO(3)$?).}   iii) Time reversal: it  implies that there is a symmetry between stable and unstable manifolds that  perhaps could be numerically explored.  

  \begin{proposition} \label{discrete}  Discrete symmetries.  
  \begin{enumerate} 
   \item[$(i)$] Reflection (Left-right): 
 {\footnotesize \begin{equation*} 
 \tilde{v} =    v(t)\,\,,\,\,\,  \tilde{a}(t) = a(t)  \,\,,\,\,\, \tilde{M}_1(t) =  -M_1(t) \,,\,\,   \tilde{M}_2(t) =  M_2(t)  \,\,\,,\,\,\, \tilde{M}_3 =  -M_3(t).
\end{equation*}}
     \item[$(ii)$] velocity reversal: {\footnotesize \begin{equation} 
 \tilde{v}(t) = -v(t)\,,\,  \tilde{a}(t) =  -a(t)  \,\,,\,
\tilde{M}_1(t) =  M_1(t)\,,\, \tilde{M}_2(t) =  -M_2(t)\,,\, \tilde{M}_3(t) =  - M_3(t). 
 \end{equation} }
    \item[$(iii)$]  Time-reversal:  
  {\footnotesize \begin{equation*} 
  \tilde{a}(-t)=-a(t), \tilde{v}(t)=v(-t) \,,\, \tilde{M}_1(t) =-M_1(-t), \tilde{M_2}(t)=M_2(-t), \tilde{M}_3(t)  =  M_3(-t). \end{equation*}
  }
  \end{enumerate}
  \end{proposition}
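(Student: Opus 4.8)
The plan is to read each of the three transformations as a map $\Phi$ on the reduced phase space $M_{red}=T^*\R_+\times\R^3$ and to exploit the Hamiltonian structure of the reduced flow rather than to verify the equations blindly. Recall from \eqref{cubic} that the reduced dynamics \eqref{reducedexplicit} is generated by $H_*=\frac{1}{2\beta}(a^2+(M_3/v)^2)+M_2\,v/r$ together with the Poisson brackets $\{v,a\}=1$ and $\{M_i,M_j\}=\epsilon_{ijk}M_k$. The guiding principle is standard: if $\Phi$ is a Poisson map with $H_*\circ\Phi=H_*$ then $\Phi$ commutes with the flow $\phi_t$ (an ordinary symmetry), whereas if $\Phi$ is \emph{anti}-Poisson (it reverses all brackets) and still satisfies $H_*\circ\Phi=H_*$, then $\Phi\circ\phi_t=\phi_{-t}\circ\Phi$, i.e. it intertwines the flow with its time reverse. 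Each listed transformation is linear and diagonal, of the form $(v,a,M_1,M_2,M_3)\mapsto(\tau_v v,\tau_a a,\sigma_1 M_1,\sigma_2 M_2,\sigma_3 M_3)$ with signs $\tau_\bullet,\sigma_\bullet\in\{\pm1\}$; for $(iii)$ one additionally composes with $t\mapsto -t$.

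First I would record two elementary sign criteria for such diagonal maps. Because pulling back along such a map multiplies $\{v,a\}$ by $\tau_v\tau_a$ and sends $\{M_i,M_j\}=\epsilon_{ijk}M_k$ to $\sigma_i\sigma_j\,\epsilon_{ijk}M_k$ (to be compared with $\sigma_k\,\epsilon_{ijk}M_k$), the map is Poisson exactly when $\tau_v\tau_a=1$ and $\sigma_1\sigma_2\sigma_3=1$, and anti-Poisson exactly when both products equal $-1$. Invariance of $H_*$ holds precisely when the signs leave the three monomials $a^2$, $M_3^2/v^2$ and $M_2 v$ unchanged. For $(i)$ the signs are $(\tau_v,\tau_a;\sigma_1,\sigma_2,\sigma_3)=(1,1;-1,1,-1)$: here $\tau_v\tau_a=1$ and $\sigma_1\sigma_2\sigma_3=1$, so $\Phi$ is Poisson, and since only $M_1,M_3$ are flipped the three monomials are manifestly invariant; hence $(i)$ is a genuine symmetry. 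For $(ii)$ the signs are $(-1,-1;1,-1,-1)$: again $\tau_v\tau_a=1$, $\sigma_1\sigma_2\sigma_3=1$, and each monomial is even under the simultaneous flips (e.g. $M_2 v\mapsto(-M_2)(-v)=M_2v$), so $\Phi$ is Poisson and preserves $H_*$, giving a symmetry as well.

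Finally, for $(iii)$ the signs are $(1,-1;-1,1,1)$, for which $\tau_v\tau_a=-1$ and $\sigma_1\sigma_2\sigma_3=-1$, so the spatial map is anti-Poisson; flipping only $a$ and $M_1$ again leaves $a^2,\,M_3^2/v^2,\,M_2 v$ untouched, so $H_*$ is preserved. By the guiding principle this yields $\Phi\circ\phi_t=\phi_{-t}\circ\Phi$, and unwinding this identity on a solution curve (writing $w(s)=\Phi(z(-s))$) reproduces exactly the stated relations $\tilde v(t)=v(-t)$, $\tilde a(-t)=-a(t)$, $\tilde M_1(t)=-M_1(-t)$, $\tilde M_2(t)=M_2(-t)$, $\tilde M_3(t)=M_3(-t)$. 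I expect no deep obstacle here; the only real care lies in two bookkeeping points. The first is getting the chain-rule sign from $t\mapsto-t$ right in $(iii)$, so that the extra factor on the time derivative matches the anti-Poisson sign. The second is that $(ii)$ sends $v\in\R_+$ to $v<0$, formally leaving the physical domain $M=TS^2-0$; this is precisely the artificial singularity at $v=0$ flagged earlier, and the symmetry should be read on the natural double cover (equivalently, as the velocity reversal of the underlying curve), not literally on $\R_+$. As an independent cross-check I would also substitute each transformed tuple directly into \eqref{reducedexplicit} and verify the five equations line by line, which is immediate and confirms the structural argument.
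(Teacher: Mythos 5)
Your proof is correct, and it takes a genuinely different route from the paper, which states Proposition \ref{discrete} with no proof at all: the implicit argument there is precisely the line-by-line substitution into the reduced equations \eqref{reducedexplicit} that you relegate to a final cross-check, with the sign patterns motivated geometrically (reflection of the spherical curve across a great circle, velocity reversal, time reversal of the unreduced system). Your structural argument stands on its own and is sound: for diagonal sign maps the criteria $\tau_v\tau_a=1,\ \sigma_1\sigma_2\sigma_3=1$ (Poisson) and $\tau_v\tau_a=\sigma_1\sigma_2\sigma_3=-1$ (anti-Poisson) are exactly right --- on the Lie--Poisson factor, flipping two of the $M_i$ is a rotation by $\pi$ (determinant $+1$) while flipping one is a reflection, and for $A\in O(3)$ acting on $\mathfrak{so}(3)^*\simeq\R^3$ one has $\{f\circ A,g\circ A\}=\det(A)\,\{f,g\}\circ A$ --- and since $H_*$ in \eqref{cubic} is a function of the monomials $a^2$, $M_3^2/v^2$, $M_2 v$, invariance is immediate; the standard conjugation facts ($\Phi\circ\phi_t=\phi_t\circ\Phi$ in the Poisson case, $\Phi\circ\phi_t=\phi_{-t}\circ\Phi$ in the anti-Poisson case) then deliver (i), (ii), and (iii) including the chain-rule sign for $t\mapsto -t$. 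What your approach buys: it explains why these particular sign patterns work rather than merely verifying them, and it shows at no extra cost that the same three symmetries hold for the reduced time-minimal Hamiltonian $H_*=-1+A\sqrt{a^2+M_3^2/v^2}+M_2 v/r$, which is built from the same monomials. What the paper's implicit route buys: the identification of each reduced symmetry with a geometric operation on the unreduced system in $T^*(TS^2)$, which is what makes them useful for reconstruction and for the global variational questions the paper raises. Finally, your caveat that (ii) sends $v>0$ to $v<0$, and so must be read as velocity reversal of the underlying curve (on a double cover, or after regularization across $v=0$) rather than literally on $T^*\R_+$, matches the paper's own later remark that one may "allow $v<0$" to describe both twin circles.
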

 
 These symmetries may be useful in finding  periodic orbits via global  calculus of variations,  and   perhaps find their stability using symplectic techniques \cite{Chenciner2000}, \cite{MLewis2013}.

 \subsubsection*{The fixed points are focus-focus singularities} 
In order to linearize about the equilibria it is convenient to take  spherical coordinates
 on the momentum sphere, 
\begin{equation}  \label{momentumsphere}  
{\bf M} =  \mu \, (\, \cfi \, \cte\,,\, \sfi\,,\,  \cfi \, \ste\,) \,\,.
\end{equation}
The reduced system is confined to the symplectic manifold  $M_\mu := T^*\R_+ \times S^2_{\mu}$,  where 
$S^2_{\mu}$ is the momentum sphere of radius $|\mu|$ (and recall that $T^*\R_+ = \{(v,a) \,: \, v > 0 \}$).

We can also define $ z= \sin \phi$ so that the symplectic form  on   $M_\mu$  becomes
\begin{equation} \label{omegared}   \Omega_{M_{\mu}}   =  da \wedge dv   + \,  \mu \, \cfi \,d\phi\,\wedge \, d\theta =  da \wedge dv   + \,  \mu \, dz \wedge \, d\theta
\end{equation}
and the reduced optimal Hamiltonian writes as (recall $z = \sin \phi$)
\begin{eqnarray} \label{hred}   H^{{\rm red}}_* &= &\, \frac{1}{2\beta} \,a^2 +  \,\frac{\mu^2}{2\beta} \, \frac{ (\cfi  \, \ste)^2}{v^2 } + \,\, \mu\, \sfi \, (v/r)\, \nonumber \\ & = & \frac{1}{2\beta} \,a^2 +  \frac{\mu^2}{2\beta} \,  (1-z^2)  \, (\ste)^2/v^2  + \, \mu\, z \, v/r
 \,.
\end{eqnarray}
  The equilibria are
\begin{equation} \label{fixed}
 a_o = 0\,\,\,,\,\,\, v_o^3 =  \, \pm\, \left( \frac{\mu \, r }{\beta} \right) \sqrt{2}/2   
 \,\,\,,\,\,\,   \theta_o =  \pi/2 \,\, {\rm or}\,\, 3\pi/2  \,\,\,,\,\,\, z_0 =  \pm \sqrt{2}/2 
 \end{equation}   with energy
 \begin{equation}  h^* = (3/2) \,\beta\,  (v^4/r^2) . \end{equation}

We add $v$ to the  parameters $\mu,r, \beta $.   It  turns out that the matrix  that linearizes the  Hamiltonian system given by  (\ref{omegared}) and (\ref{hred})   does not depend on $\mu$ and is the same for both equilibria:
\begin{equation}
A = \left(\begin{array}{cccc} 0 & -\frac{3\, \mathrm{\beta}\, v^2}{r^2} & -\frac{3\, \sqrt{2}\, \mathrm{\beta}\, v^3}{r^2} & 0\\ \frac{1}{\mathrm{\beta}} & 0 & 0 & 0\\ 0 & 0 & 0 & \frac{\sqrt{2}\, v}{2\, r}\\ 0 & \frac{3}{r} & -\frac{\sqrt{2}\, v}{r} & 0 \end{array}\right)
\end{equation}
 Furthermore, its characteristic polynomial does not depend on $\beta$.
\begin{equation}   p  =  {\mathrm{\lambda}}^4 + \frac{4\, v^2}{r^2}  \,  {\mathrm{\lambda}}^2\, +  \frac{12\, v^4}{r^4} .
\end{equation}

\begin{proposition}
The eigenvalues at the fixed points (\ref{fixed1})  (equivalently (\ref{fixed})) are loxodromic (focus-focus type) 
\begin{equation} (v/r) \, \sqrt{2}\,\, \sqrt[4]{3}\,\,  \left( \,\,  \pm  \sqrt{\frac{1}{2} - \frac{\sqrt{3}}{{6}} } \,\,\,\, \pm \,\,\,\,\,    \sqrt{\frac{1}{2} + \frac{\sqrt{3}}{{6}} }  \,\, i  \,\,\,  \right)
\end{equation} 

 In $T^* TS^2$,  the union for all $v \neq 0$   of these special circle solutions with $\kappa_g = 1/r$  forms a center manifold $C$ of dimension 4.
  \end{proposition}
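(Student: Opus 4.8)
The plan is to establish two separate claims. First, for the eigenvalue computation, I would directly analyze the characteristic polynomial $p = \lambda^4 + \frac{4v^2}{r^2}\lambda^2 + \frac{12 v^4}{r^4}$ already derived from the linearization matrix $A$. Setting $w = \lambda^2$, this reduces to the quadratic $w^2 + \frac{4v^2}{r^2} w + \frac{12 v^4}{r^4} = 0$, whose roots are $w = \frac{-4v^2/r^2 \pm \sqrt{16 v^4/r^4 - 48 v^4/r^4}}{2} = \frac{v^2}{r^2}(-2 \pm i\sqrt{8})$. The discriminant $16 - 48 = -32 < 0$ is the key observation: it is negative, so the two values of $\lambda^2$ are genuinely complex (non-real) conjugates, neither purely real nor purely imaginary. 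Writing $w = \frac{v^2}{r^2}(-2 \pm 2\sqrt{2}\, i)$, I would take square roots to obtain the four eigenvalues $\lambda$. The claimed form $(v/r)\sqrt{2}\sqrt[4]{3}(\pm\sqrt{\tfrac12 - \tfrac{\sqrt3}{6}} \pm \sqrt{\tfrac12 + \tfrac{\sqrt3}{6}}\, i)$ then follows by computing the modulus $|w| = \frac{v^2}{r^2}\sqrt{4 + 8} = \frac{v^2}{r^2}\cdot 2\sqrt{3}$, so $|\lambda| = (v/r)\sqrt{2}\sqrt[4]{3}$, and matching real and imaginary parts via the half-angle/square-root formulas for the complex number $-2 + 2\sqrt2\, i$. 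The fact that all four eigenvalues have nonzero real \emph{and} imaginary parts, arranged as the four values $\pm a \pm b i$, is precisely the loxodromic (focus-focus) signature.

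For the second claim, about the center manifold, I would argue as follows. Each fixed point of the reduced system on $M_\mu$ corresponds, via reconstruction, to a relative equilibrium in the full phase space $T^*M$, and as established in Proposition \ref{figeight} these reconstruct to the special circle solutions with $|\kappa_g| = 1/r$. The family is parametrized by $v \in \R_+$ (and the discrete choices of sign), with $\mu = \sqrt{2}\,\beta v^3/r$ tying $\mu$ to $v$. The plan is to show that the union over all $v \neq 0$ of the corresponding periodic orbits in $T^*TS^2$ forms a $4$-dimensional invariant submanifold $C$ and that it is a center manifold in the dynamical-systems sense. I would count dimensions: the reduced fixed point fixes $a=0$ and determines $v$, $z$, $\theta$ in terms of $\mu$ (hence one free parameter $v$ per sign choice on the reduced side), while reconstruction adds the $SO(3)$ group orbit (dimension $3$) of the periodic motion; together with the one-parameter family in $v$ this yields dimension $1 + 3 = 4$.

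The identification of $C$ as a center manifold requires relating the reduced loxodromic directions to the unreduced linearization. The key point is that a focus-focus equilibrium in the reduced system has all four eigenvalues off the imaginary axis, so its local stable and unstable manifolds are each $2$-dimensional and there is no reduced center direction; the \emph{center} directions of the full system therefore come entirely from the neutral directions transverse to the reduced fixed point, namely the flow direction along each periodic orbit together with the directions generated by the symmetry group and the parameter $v$. I would make this precise by computing the linearization of the \emph{full} Hamiltonian vector field along a chosen relative equilibrium and verifying that the eigenvalues on the imaginary axis (including the zero eigenvalues from the conserved quantities $\mu$ and the energy, and the oscillatory directions along the group orbit) account for exactly the $4$ tangent directions to $C$, while the remaining directions are the loxodromic ones lifted from the reduced system.

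The main obstacle I anticipate is the last step: passing rigorously from the reduced linearization (where the computation is clean and already done) to the unreduced linearization along the periodic orbit, since reconstruction introduces the nontrivial $SO(3)$ dynamics and one must carefully track how the reduced eigenvalues and the neutral reconstruction directions assemble into the full spectrum. The clean statement that $A$ is independent of $\mu$ and the same at both equilibria is helpful here, but confirming that the transverse hyperbolic (loxodromic) behavior persists in $T^*TS^2$ and that the only neutral directions are those tangent to $C$ will require either an explicit Floquet-type analysis along the periodic orbit or an appeal to the reconstruction/reduction correspondence for the spectrum of relative equilibria. I would lean on the latter, invoking the standard splitting of the unreduced linearization into reduced-dynamics eigenvalues plus group-orbit directions.
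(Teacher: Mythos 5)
Your proposal is correct and takes essentially the same approach as the paper: the eigenvalues are obtained by solving the biquadratic characteristic polynomial already displayed before the proposition (your half-angle bookkeeping, $\lambda^2 = \tfrac{v^2}{r^2}(-2\pm 2\sqrt2\,i)$, $|\lambda| = (v/r)\sqrt2\,\sqrt[4]{3}$, is exactly the implicit computation), and the center-manifold dimension follows from the same reconstruction dimension count --- your direct count $1+3=4$ is the same bookkeeping as the paper's $6+6-8=4$ via the lifted $6$-dimensional stable/unstable manifolds $W_C^s,\,W_C^u$. If anything, your acknowledgment that a fully rigorous identification of $C$ as a center manifold would require a Floquet-type or reduction-correspondence spectral argument goes beyond the paper, which only asserts that the dimension count is coherent.
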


 In the reduced space we have local unstable and stable (spiralling) manifolds
 of dimension two.   They lift to 6-dimensional stable and unstable manifolds $W_C^s, \, W_C^u $ inside $T^* TS^2$.  
This dimension count  is coherent with $  {\rm  dim } C =  6 + 6 - 8 = 4 $.
 
Several global dynamical question can now be posed:  on the unreduced system, take initial conditions 
near the focus-focus equilibrium. What happens with their solutions and  with the corresponding unreduced solutions?

More precisely, understanding the global behavior of $W^u$ and $W^s$ is in order. Do they intersect transversally?

\subsubsection*{Are equators in the `periphery' of  phase space?}  \label{periphery}

The equations of motion corresponding to the symplectic form \eqref{omegared}  and the Hamiltonian \eqref{hred}  are given by
\begin{eqnarray}   \label{reducedztheta}
\dot{v} & = & a/\beta \,\,,\,\,\,\,\,\,\,
\dot{a}  =  \mu \left(  -  z/r  + \, \frac{\mu }{\beta}(1-z^2)\,\frac{(\ste)^2}{v^3} \, \right),   \\
 \dot{\theta} &  = & \,  \frac{v}{r} \, -   \,\, \frac{\mu}{\beta} \,\, \frac{z\,(\ste)^2}{ v^2 } \,\,,\,\,\,\,\,\,\,
   \dot{z}  =   \,   \, \frac{\mu}{\beta}  \,\ste\,\cte \,\frac{(z-1)(z+1)}{v^2}  \nonumber   \,\,\, .
\end{eqnarray}
\smallskip

\noindent Assuming that  $v=0$ is a regularizable singularity (taking into account the various symmetries of Proposition \ref{discrete},     translated to these coordinates),  we have 
 $$  a,v, \theta \in \R \,,  \,\,\,\, |z| \leq 1 \, . $$

\noindent   The horizontal lines
$z = \pm 1$ are invariant, equivalent to  $M_1=M_3=0, M_2 = \pm \mu$.  We know from the previous discussion that reconstruction yields  the equators in the unreduced system.  The coordinate $a$ runs uniformly in time from left to right at $z=-1$ and from 
right to left at $z=+1$, namely  $a(t) = -{\rm sign}( z) \mu t/r  + a_o$.   As we expect,  $v$ is quadratic on time, with leading term $-{\rm sign}( z) \mu t^ 2/(2 r \beta ) $.    

As for   $\theta$, 
for  $ |t|  $  sufficiently large the second term in the equation for $ \dot{\theta} $    can be dropped out.   Thus for such  large  $ |t|  $    we have  $\theta(t) \sim
-{\rm sign}( z) \mu t^ 3/(6 r \beta)$.   

 This means that except possibly  at intermediate times, the horizontal  invariant $\theta$ lines in the plane $ (\theta, z)$ 
run in opposite ways\footnote{This  information  could be of interest for symplectic topologists:  Poincar\'e-Birkhoff  theorem should be applicable. } for  $z=\pm 1$.      

\subsection{Simulations of  $S^2$ cubic  splines}  
\label{simulations}  Numerical work  on sphere splines include (we apologize for  omissions), \cite{Huper2006}, \cite{Noakesspherical2006}  (a survey for the computational geometry community),  \cite{Samir2012}  (a gradient descent method).  
Here  we present some experiments using the reduction to
two degrees of freedom.  Besides the `figure eights' of Prop.\ref{figeight}, the  other family of solutions that seems to have fundamental dynamical importance are the equators, described in \ref{equatorsaccelerating}.

  \begin{figure}[b]   
  \label{integrablesection}
   \includegraphics[width=0.8\linewidth]{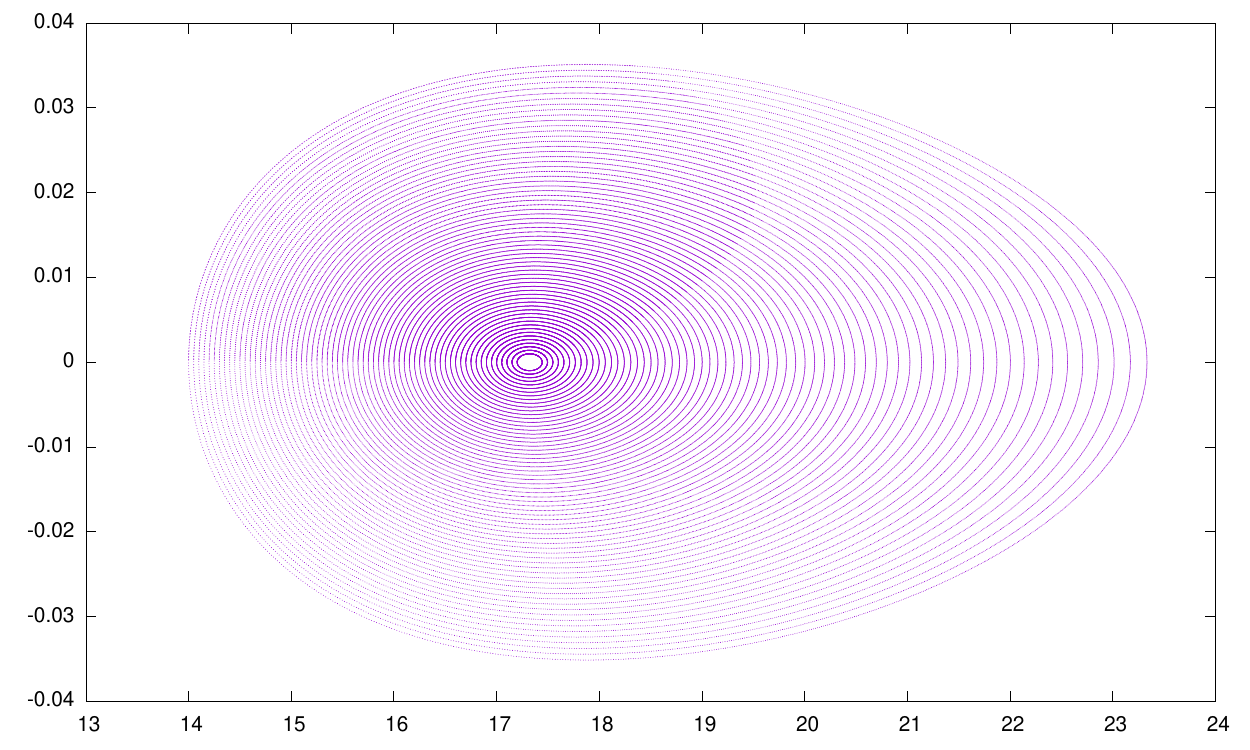}
   \caption{Energy  $h = 0.01$.   Regular trajectories.}
\end{figure}

 \begin{figure}
 \label{chaoticsection1}
   \includegraphics[width=0.8\linewidth]{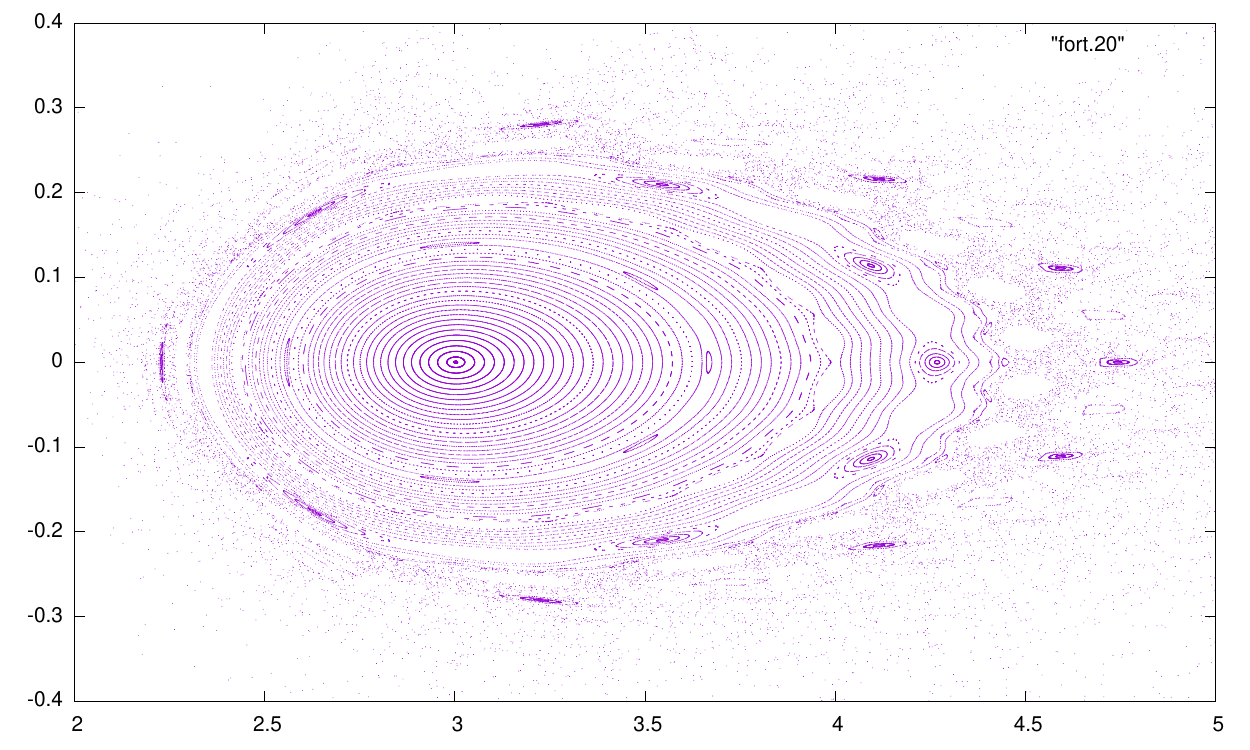}
   \caption{Energy  $h = 0.332412099$.  There is a large chaotic zone, with escaping trajectories.}
\end{figure}

 \begin{figure}
 \label{chaoticsection2}  
   \includegraphics[width=0.8\linewidth]{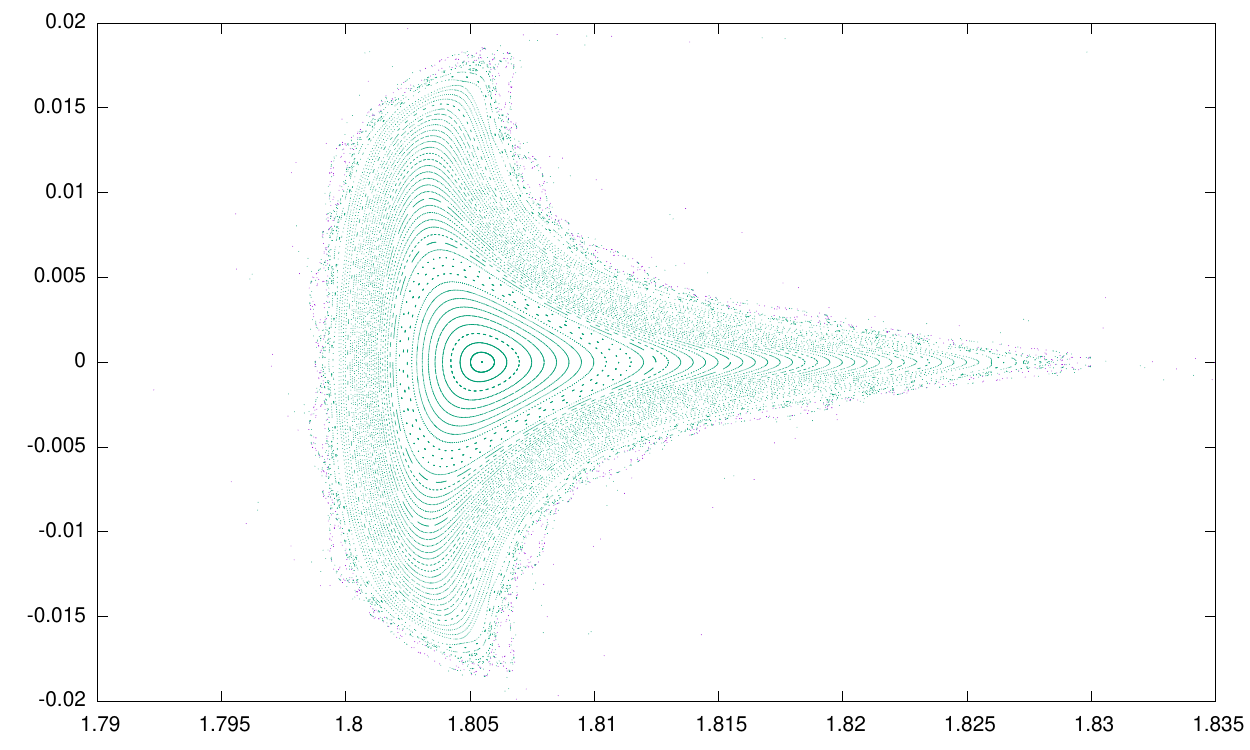}
   \caption{Energy  $h = 0.806$. Even  larger chaotic/escaping zone. The triangular feature is probably
   related to a 3:1 torus resonance. Note that we zoomed in with respect to Fig.2.} 
   \end{figure}
 
  \begin{figure}
 \label{chaoticsection3}
   \includegraphics[width=0.8\linewidth]{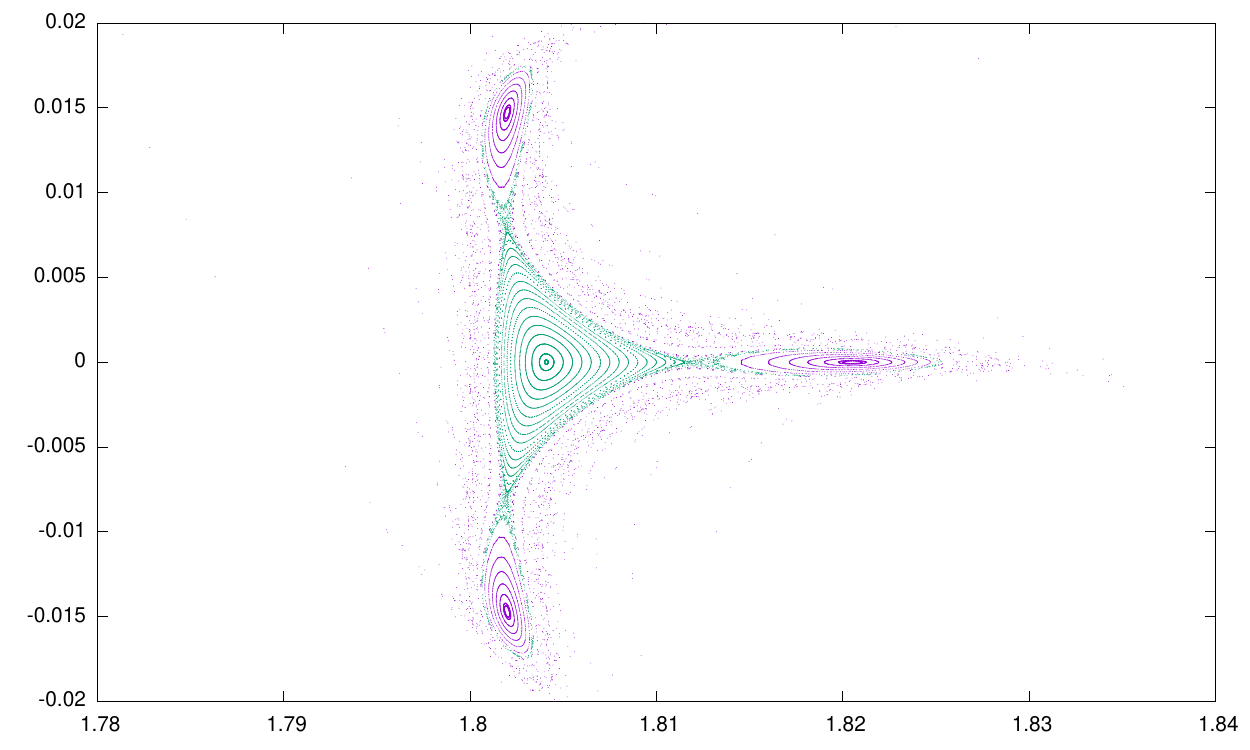}
      \includegraphics[width=0.8\linewidth]{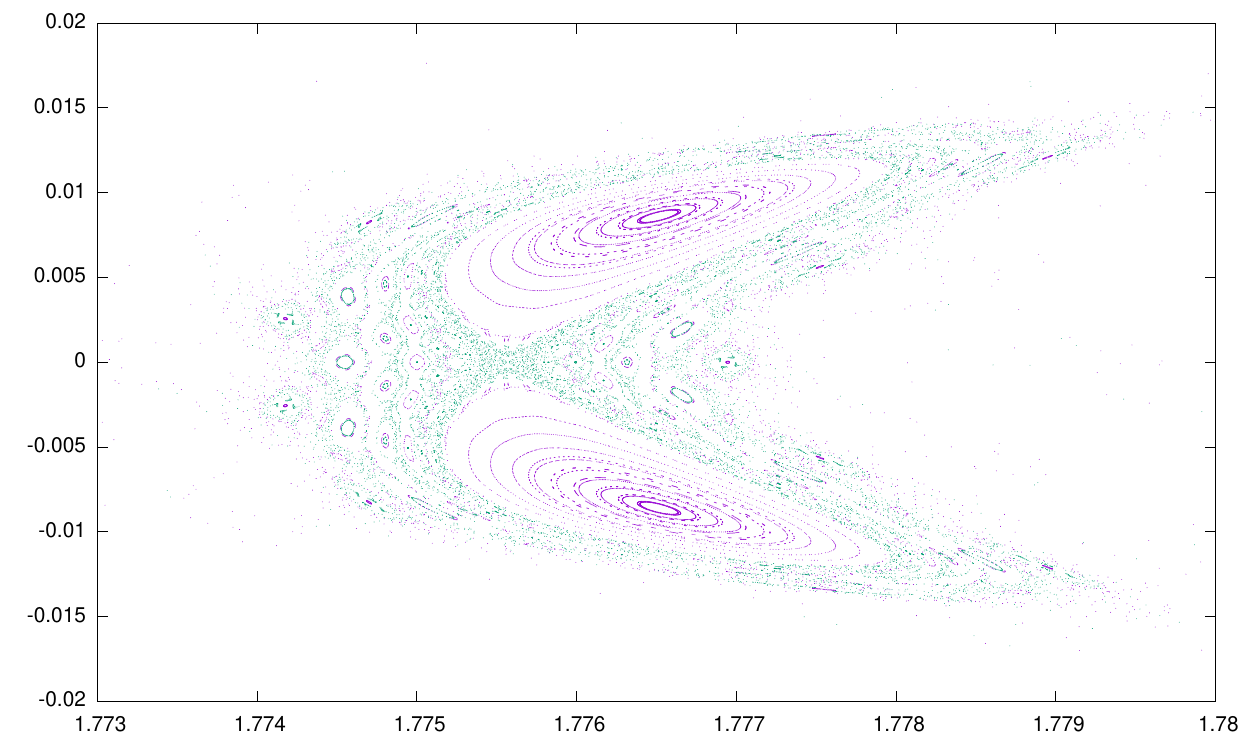}
         \includegraphics[width=0.8\linewidth]{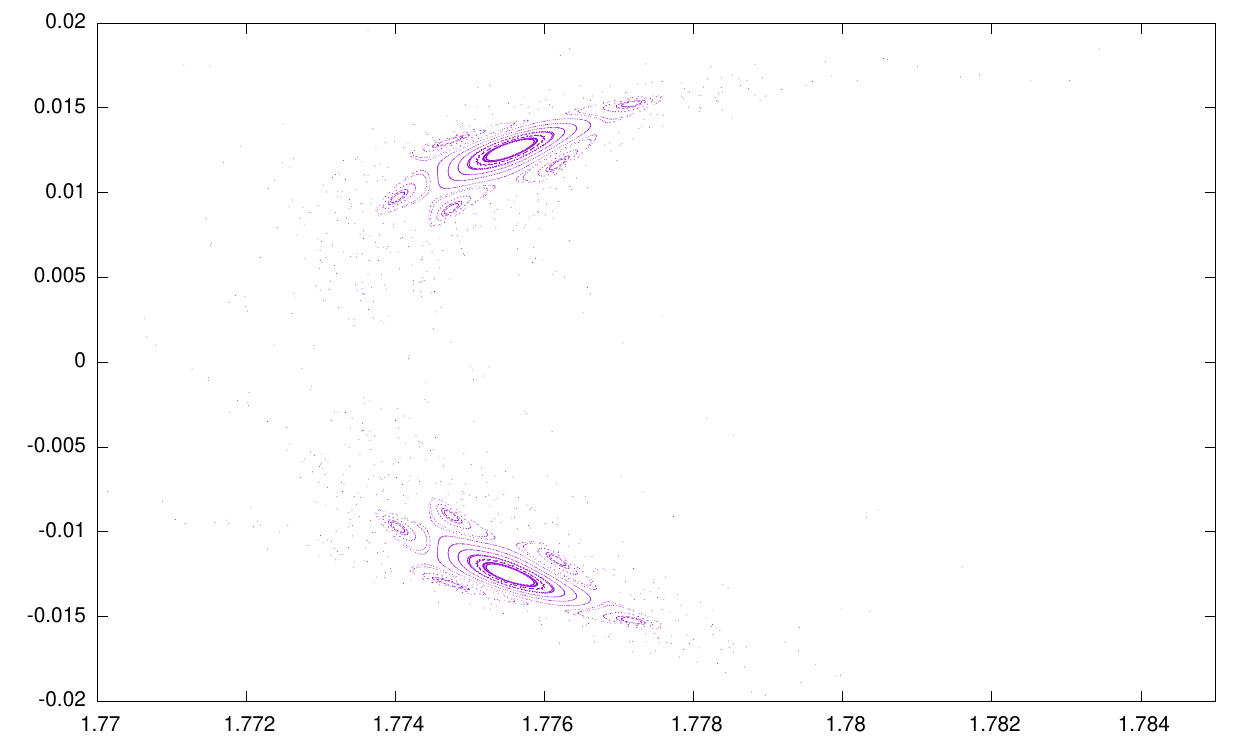}
   \caption{Nearby energies   $h = 0.8065, \,0.818, \, 8189$.  Only a small $a$ interval was depicted for better visualization.  Which bifurcations took place:  pitchfork, period doubling, Hamiltonian Hopf? 
   }  
\end{figure}

  \begin{figure}
 \label{trajectorytorus}
   \includegraphics[width=0.8\linewidth]{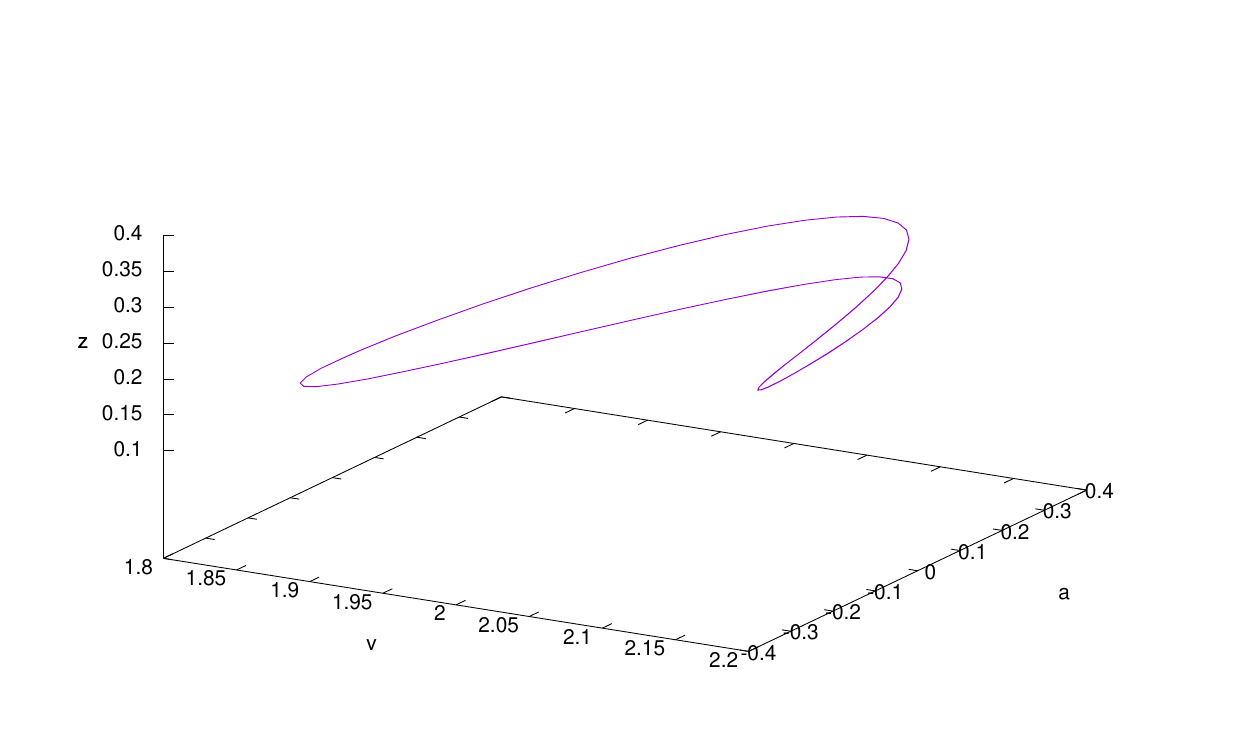}
      \includegraphics[width=0.8\linewidth]{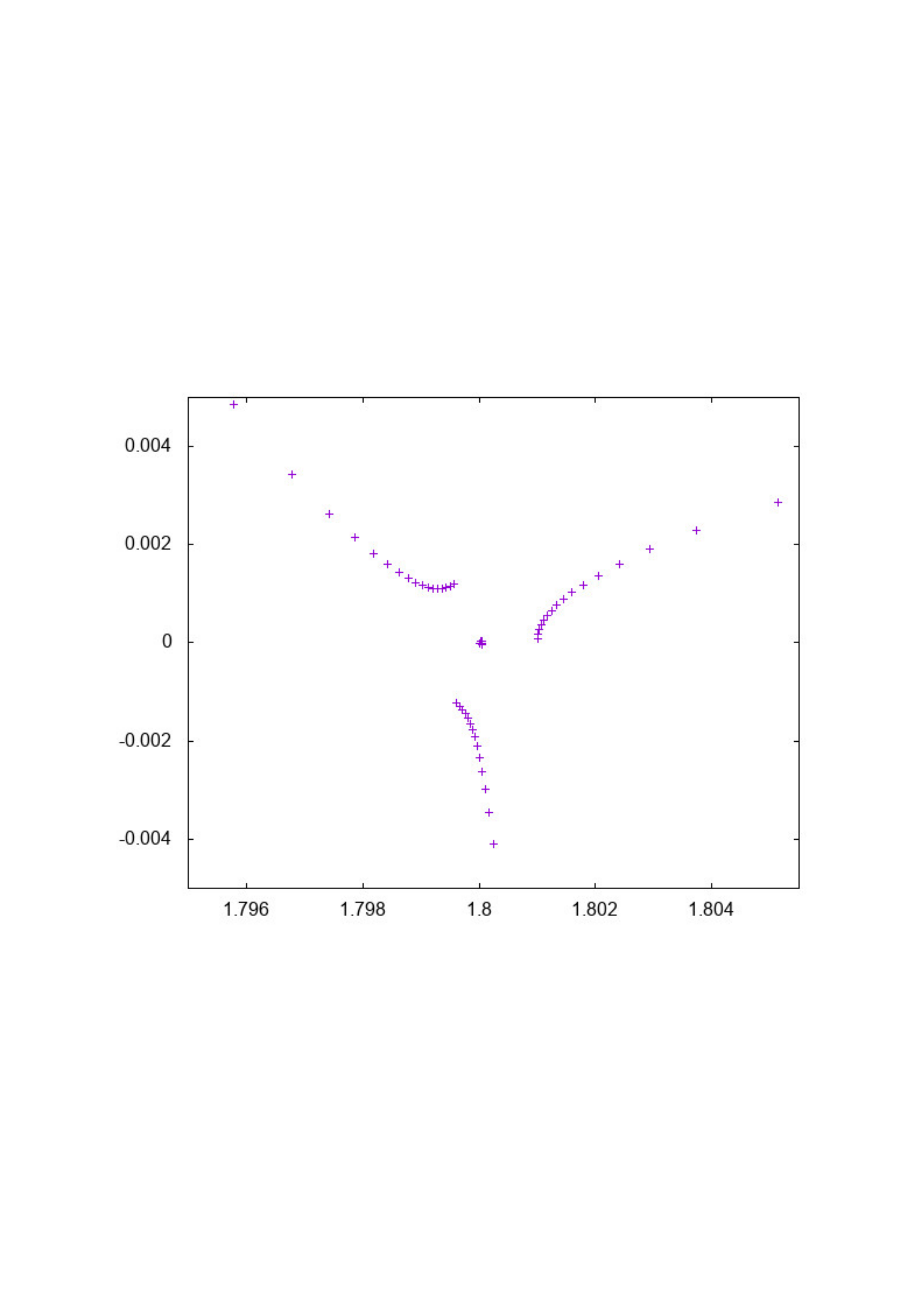}
   \caption{The periodic trajectory in the energy level   $h = 0.808$.  Note the central zone shrinking in the associated surface of section} 
   \end{figure}

Understanding  the dynamics near the equators is important both  conceptually and
 numerically  (see figs 6.1 and 6.2 in \cite{Noakes2014a}). Linearization does not help. It  is easy to see from the general cubic spline equation \eqref{oldcubic} that   for any Riemannian metric   geodesics whose accelerations vary linearly   are    $L^2$ splines.
 Our original (uninformed)  guess  was that, for any Riemannian manifold, splines would tend to accelerating geodesics as 
 $t \rightarrow \pm \infty$.\\
 
 Our numerical experiments indicate that this is not the case for cubic splines on $S^2$.     They strongly suggest that the system is non-integrable.   However,  we found zones in the reduced phase space having
 invariant tori.  
 
 \subsubsection*{Surface of sections}  
In Figures 1 to 4  we depict some Poincar\'e sections  of phase space $(v, a, \theta, z)$, taking 
 $\theta = \pi/2$.  The vertical axis is $a$, horizontal $v$.   The parameters are  $\beta= 1,  r = 2, \mu =2$.  Figure 5 shows a central periodic trajectory.

 \subsubsection*{Invariant tori}  Figs.6 and 7  depict  invariant tori.  
 They were found,  among several others with complicated Lagrangian projections,  by (obsessive) trial and error experimentation.  They live on remote regions of phase space, neither close to the loxodromic equilibria nor to the equators $z=\pm 1$.   There is a variety of confined trajectories whose `morphology'  merit further study.

  \begin{figure}
 \label{invarianttorus}
   \includegraphics[width=0.8\linewidth]{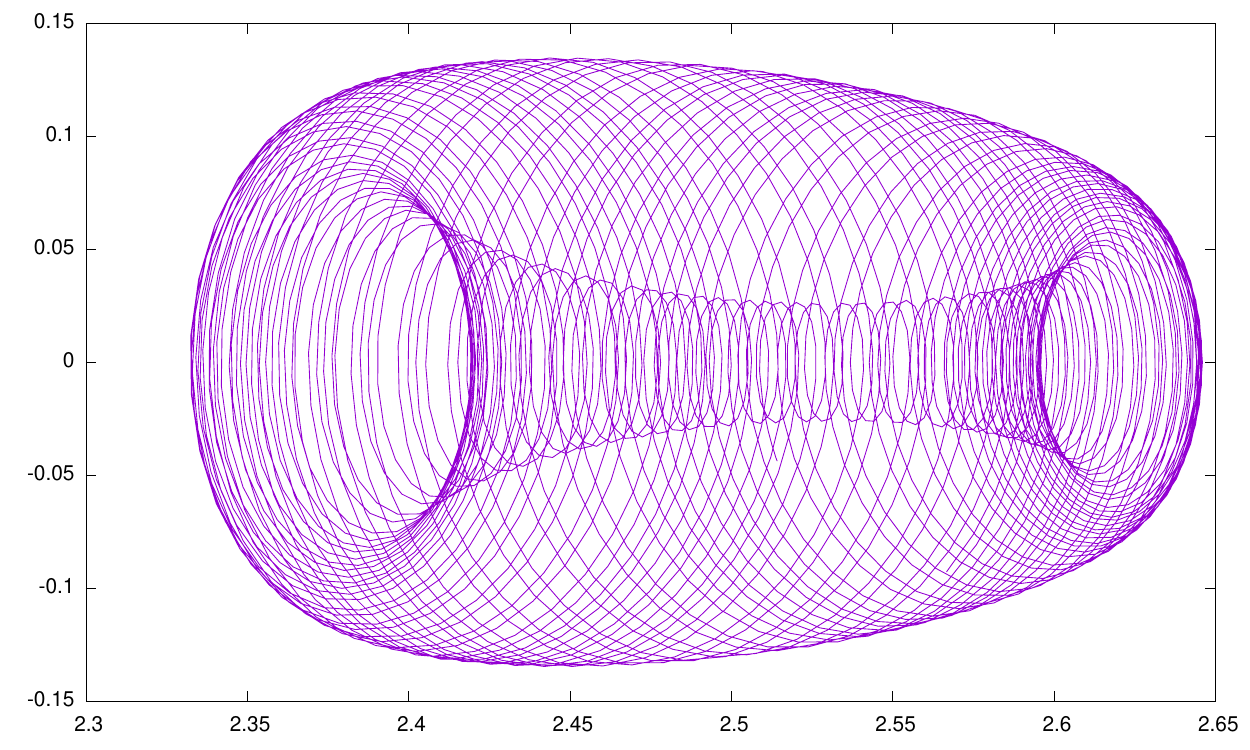}  
    \includegraphics[width=0.8\linewidth]{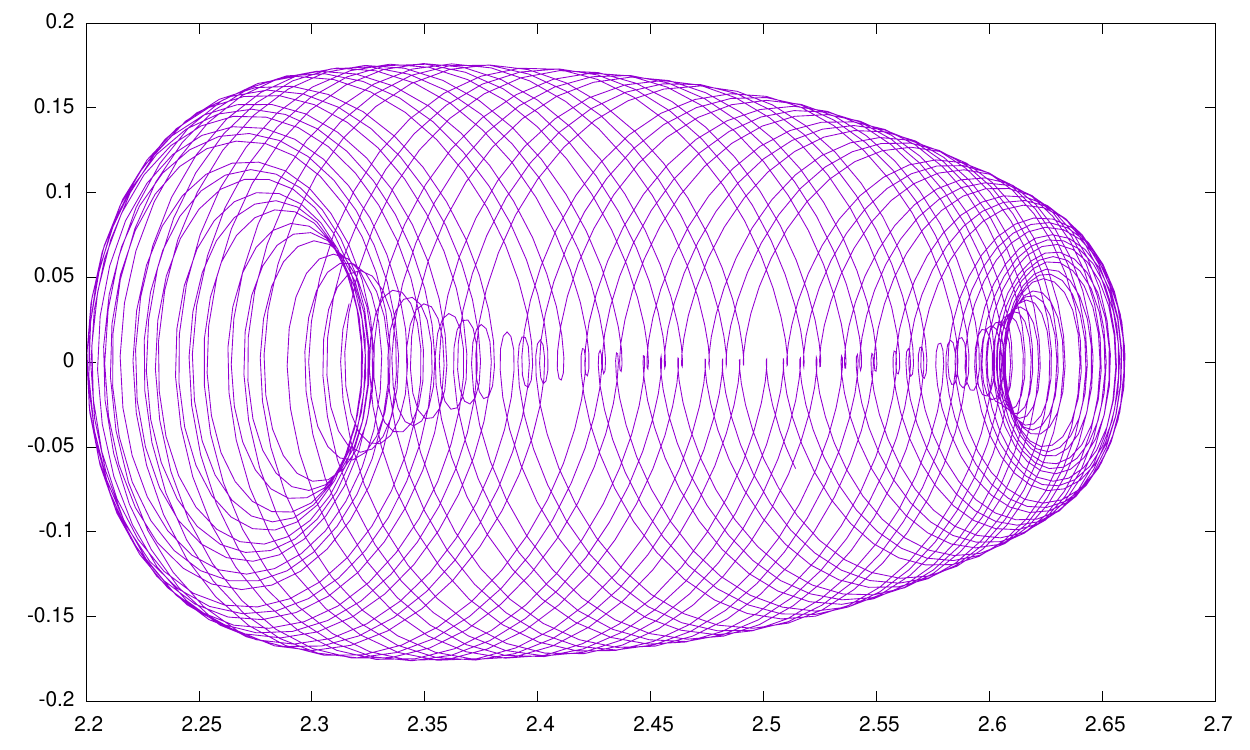}  
   \caption{Invariant tori, seen on a Lagrangian projection in the plane $(a,z)$. 
   Energies\,  $ h=0.49494873, \, $ and  $h = 0.522397316$.
}
\end{figure}

  \begin{figure}
 \label{invarianttorus2}
   \includegraphics[width=0.8\linewidth]{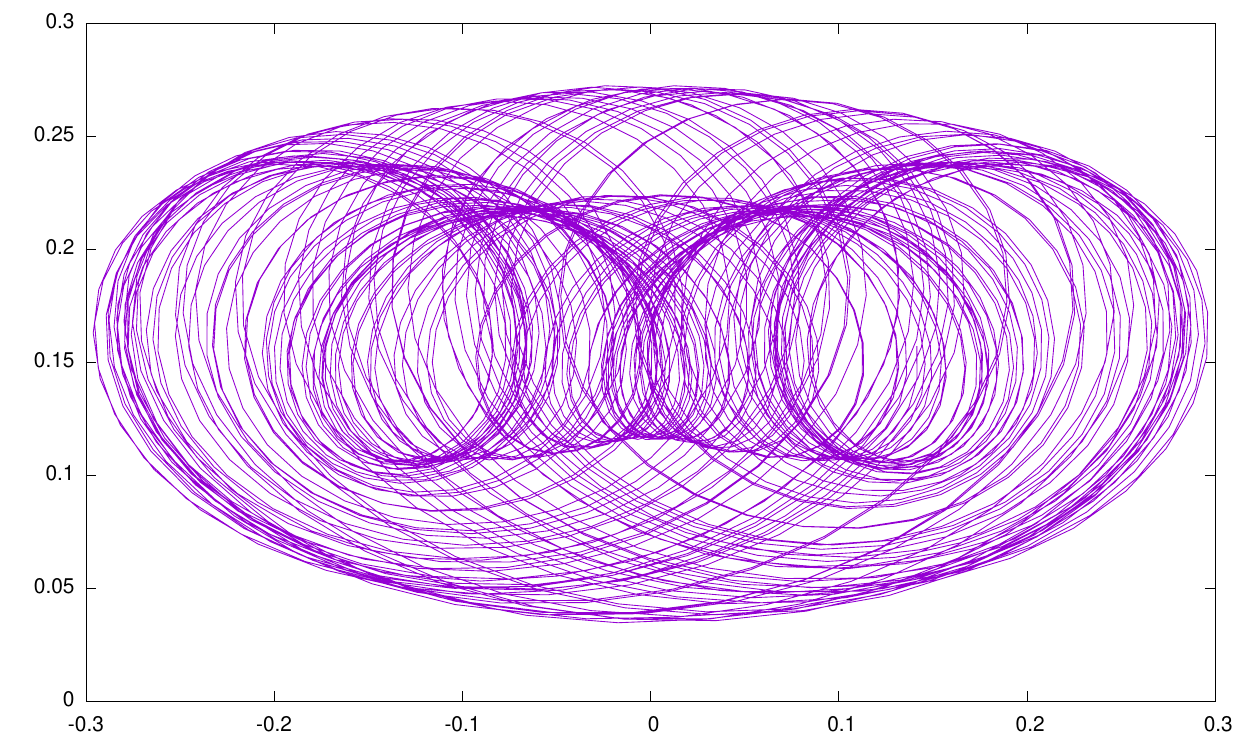}  
   \caption{An invariant torus, seen on a Lagrangian projection in the plane $(a,z)$. 
   $h = 0.586204019$, $\beta=1, \mu= r= 2$. }
\end{figure}

 \subsubsection*{Trajectories emanating from the focus-focus}  The simulations suggest that  (at least some of them)  are  reaching a neighborhood of an equator.    Will they eventually recur back to the focus-focus loxodromic equilibrium (figure eights of the unreduced)?   Our simulations suggest that  as $ t \rightarrow \infty$   those trajectories seem to `orbit'  around the equator
 $z=-1$, but they stay at a `safe' distance to it  (see Figure 8).        
 
 We assert  that  this is {\it not} an artifact of the integrator.  Here's an heuristic argument.    From (\ref{reducedztheta}) it follows that  when $v \rightarrow \infty$  then  $\dot{\theta} \sim v/r $, 
 and $\theta$  also diverges (only more so).   Therefore   $\dot{z} = O(v^{-2})  \rightarrow 0$
  is an ever  oscillatory way.  This suggests that  $z$  perhaps could stabilize close to  $z= -1$, but at a distance of it. 
  
  A study of  (\ref{reducedztheta}) is in order.   The behavior of the systems for moderate values of $v$ seems quite unpredictable.

  \begin{figure}
 \label{trajectory0}
      \includegraphics[width=1.\linewidth]{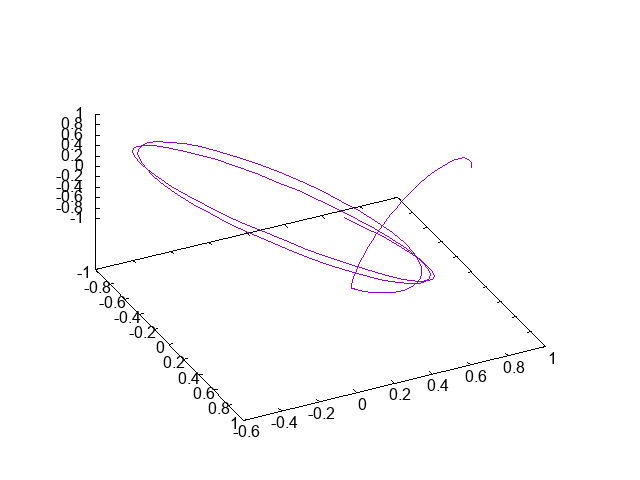}
      \includegraphics[width=0.8\linewidth]{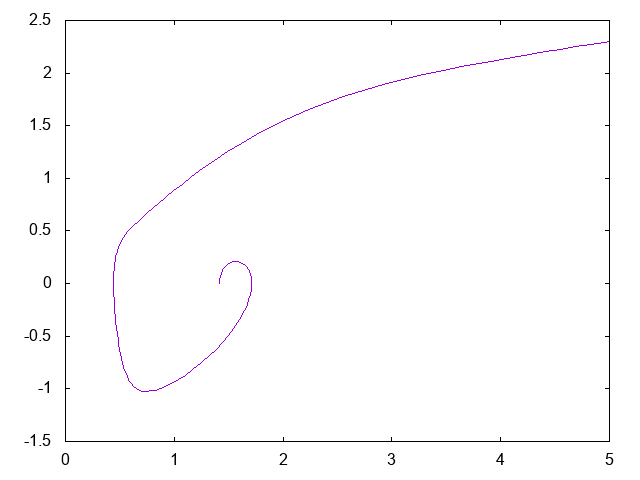}
   \caption{Top: reconstructed  trajectory in the physical sphere, that approaches a neighborhood of an equator.  Below: the  reduced trajectory  emanating from the  unstable equilibrium, projected in the $ (v,a)$ plane.  Note that  $v$ is growing quadratically with respect to $a$. 
   The reconstructed trajectory is approaching a neighborhood of an equator.  It remains to be  seen if it stays there or returns to a vicinity of the reduced equilibrium. } 
   \end{figure}

\subsection{Zero velocities and the relation to the general approach via split variables} \label{CrouchLeite1}

\subsubsection*{Motivation: unfolding $v=0$}
The price we had to pay in the description given so far of cubic splines in $S^2$ is that the scalar velocity $v$ appears  in denominators of
(\ref{moreexpli}), and it  may vanish   along a solution in finite time.  Nonetheless,  we expect that this is regularizable. In other words,  we posit that troubles at  $v=0$  are  (unfortunate) artifacts of our parameterization of $TS^2$ which excludes zero velocities and, ultimately, of the reduction procedure we implemented. (Notice that the lifted $SO(3)$ action on $TS^2$ is not principal; it is so when restricted to $M=TS^2-0$.)

We must then  go  back to  the  beautiful  
 equations \eqref{CLn} for the unreduced system (taking $n=2$ there). As mentioned earlier, these were originally derived by Crouch and Leite  \cite{CrouchLeite1991,CrouchLeite1995} (we also obtained them from our general split variables recipe in section \ref{subsec:gensplinesspliteqs}). 
 Clearly,  unreduced solutions  pass through ${\bf v} = x_1=0$ as smoothly as anywhere else.  

Regularization consists in lifting a reduced solution such that $v(t_o) = 0$  to an unreduced solution.  In Crouch-Leite system there is no trouble passing though $x_1(t_o)=0$,  and projecting back the
continued unreduced solution\footnote{Moreover,  the numerical integration of (\ref{moreexpli})  and the reconstruction of  equation $\dot{R} = R X$ will most likely  perform  disastrously when  $v$ approaches zero, so in practice it may be better, anyway,  to integrate numerically the unreduced system.}.  In Proposition \ref{Poisson} below we will provide the explicit  Poisson map relating the unreduced split variables $(x_o, x_1, x_2, x_3)$  to the reduced ones $(a,v, M_1,M_2,M_3)$.  

\subsubsection*{Regularizing the reduced systems directly?}   Before moving on to relating the unreduced and reduced system, we mention the following heuristic argument.  Take the generic situation that    $x_2(t_o) \neq 0$  when  $x_1(t_o) = 0$.  Then
\begin{equation*}   x_1   \sim  x_2(t_o)  (t -t_o)  \,\, {\rm for}\,\,  t  \, \,{\rm near}\,\, t_o.  \end{equation*}
This has a dramatic consequence  in our reduction: $e_1 = x_1/|x_1| $  sudenly flips from  $ - x_2(t_o) /| x_2(t_o) |$ to  $+  x_2(t_o) /| x_2(t_o) |$.  In this case we argue that  the continuation of  (\ref{moreexpli})   beyond  $t_o$  could be done using the velocity reversal symmetry of the problem. 

One indication is the behavior of the infinitesimal rotation
(\ref{Omega}) as  $t \rightarrow t_o^-$.    We have $\Omega = (0, v/r,  u_2/v) \rightarrow  (0, 0, \infty)\,\,\,. $    

We expect that this ``infinite infinitesimal rotation'' around $e_3$  will amount to an instantaneous rotation
by $\pi$ of the tangent plane.  In other words, the vectors $e_1,e_2$  will  instantaneously change sign at $t_o$, that is,  $$e_1^+ = - e_1^-,\, e_2^+ = - e_2^-\, .$$ 

Indeed, in view of (\ref{controls})
\begin{equation*}  \int_{t_o^-}^ {t_o^+} (u_2/v) dt =  \int_{t_o^-}^ {t_o^+} \,( \kappa_g v )\, dt  =  \int_{s_o^-}^ {s_o^+} \kappa_g \, ds
\end{equation*}
and  we posit that $\kappa_g$ should  be  $\pi$ times the delta function at $s_o = s(t_o)$.\\

\subsubsection*{The Poisson map   $(x, {\bf v}, p, \alpha) \to   (a,v, M_1,M_2,M_3)$ }
We need to compute the composition 
$$ q^*(T^*S^2)\oplus_{(TS^2-0)} q^*(T^*S^2) \overset{(i)}{\simeq} T^*(TS^2-0) \overset{(ii)}{ \simeq} T^* M \overset{(iii)}{ \to} M_{red}.$$

The symplectomorphism $(i)$ is the $\psi_\na$ of section \ref{subsec:splitvar}, with underlying change of variables given in proposition \ref{splittingsphere} (in the particular case of the $(n=2)$-sphere). The symplectomorphism $(ii)$ is induced by the diffeomorphism \eqref{eq:TS2SO3} between $TS^2-0$ and $M=\R_+ \times SO(3)$ (see also appendix \ref{convexsurfaces}). The Poisson map $(iii)$ is just the projection which forgets the $R\in SO(3)$ (once we use left trivialized covectors: $T^*SO(3) \simeq SO(3)\times \R^3$). \\
A straightforward tracking the above maps yields our final result (for simplicity we took $r=\beta=1$).
\begin{proposition}  \label{Poisson} 
The above Poisson map taking the split variables  $(x,\,{\bf v},\,p,\, \alpha)$  for  $ T^*(TS^2)$, as described in proposition \ref{splittingsphere}, to the reduced variables  $(a,v,M_1,M_2,M_3)  \in M_{red}=T^*\R_+ \times \R^3 $ is given by
 \begin{eqnarray} \label{reduction2}   
 a  & =  &   \alpha \cdot  {\bf v}/v  \,\,\,,\,\,\,   v = |{\bf v}| \nonumber \\
 M_1 & =  &    {\rm det}( p, {\bf v}/v, x )     \nonumber \\
M_2 & = &  \,\,\,    p \cdot {\bf v}/v    \\
M_3 & = &    {\rm det}(\alpha,\,x,\, {\bf v}) . \nonumber
\end{eqnarray}
\end{proposition}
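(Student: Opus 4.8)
The plan is to compose the three maps $(i)$, $(ii)$, $(iii)$ explicitly, tracking how a covector $P\in T^*(TS^2)$ expressed in split variables $(x,{\bf v},p,\alpha)$ is carried to the left-trivialized reduced variables $(a,v,M_1,M_2,M_3)$. Since each of the three maps is either a known symplectomorphism or a Poisson projection (established earlier), the composition is automatically a Poisson map, so the only real content is to verify the five explicit formulas in \eqref{reduction2}. First I would fix the parametrization \eqref{eq:TS2SO3}: a point $(x,{\bf v})\in M=TS^2-0$ corresponds to $(v,R)\in\R_+\times SO(3)$ with $v=|{\bf v}|$, $x=R{\bf e}_3$ and ${\bf v}=vR{\bf e}_1$, which already gives the two easy identities $v=|{\bf v}|$ and the frame $e_1=R{\bf e}_1={\bf v}/v$, $e_2=R{\bf e}_2=x\times {\bf v}/v$, $e_3=R{\bf e}_3=x$. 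This orthonormal frame is the key bookkeeping device for everything that follows.

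The core computation is to pull back the pairing. A tangent vector to $M$ at $(v,R)$ splits as $\dot v\,\partial_v$ plus a left-invariant piece $R\,\xi$ with $\xi\in so(3)$; identifying $so(3)\simeq\R^3$ via the hat map, the cotangent fiber is coordinatized by $(a,{\bf M})$ through $\langle P, \dot v\,\partial_v + R\xi\rangle = a\,\dot v + {\bf M}\cdot\Omega(\xi)$. So to extract $a$ and ${\bf M}$ I would evaluate the split covector $P=\psi_\na(p\oplus\alpha)$ on the coordinate vector fields $\partial_v$ and on the three infinitesimal generators of the left $SO(3)$-action, and read off the coefficients. Concretely, differentiating the parametrization, $\partial_v$ corresponds to the purely vertical lift of $e_1={\bf v}/v$, so the defining property of $\psi_\na$ (namely $\langle\psi_\na(p\oplus\alpha),hor_\na(w)+a^{vert}\rangle=\langle p,w\rangle+\langle\alpha,a\rangle$) gives $a=\langle\alpha,e_1\rangle=\alpha\cdot{\bf v}/v$ at once. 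For the ${\bf M}$ components I would compute how each left-invariant generator $R\hat{\bf e}_k$ decomposes into its horizontal and vertical parts relative to $\nabla$: the horizontal part projects to the motion of the base point $x$ on the sphere and pairs with $p$, while the vertical part records the rotation of the frame vector ${\bf v}$ and pairs with $\alpha$. Assembling these three pairings and recognizing the resulting mixed products as determinants (triple products) of $p,\alpha,x,{\bf v}/v$ yields $M_1=\det(p,{\bf v}/v,x)$, $M_2=p\cdot{\bf v}/v$, and $M_3=\det(\alpha,x,{\bf v})$, after using $|{\bf v}|=v$ to absorb normalizations.

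The step I expect to be the main obstacle is correctly disentangling the horizontal/vertical decomposition of the left-invariant generators under the nontrivial interaction between the $SO(3)$-parametrization \eqref{eq:TS2SO3} and the Ehresmann connection induced by $\nabla$. A generator $R\hat{\bf e}_k$ acts simultaneously on the base point $x=R{\bf e}_3$ and on the velocity ${\bf v}=vR{\bf e}_1$, and only the \emph{vertical} (connection-dependent) part of the velocity variation should be paired with $\alpha$; the spurious horizontal drag of ${\bf v}$ must be subtracted using $\Theta_\na$. Getting this subtraction right is exactly where the shift term $\langle\tilde\alpha,x\rangle{\bf v}/r^2$ in the definition of $p$ from Proposition \ref{splittingsphere} re-enters, and it is what makes $M_2$ come out as the clean pairing $p\cdot{\bf v}/v$ rather than a mixture. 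I would cross-check the final formulas by an independent route: verify directly that the brackets $\{v,a\}=1$ and $\{M_i,M_j\}=\epsilon_{ijk}M_k$ hold when $a,v,M_1,M_2,M_3$ are regarded as functions on $T^*(TS^2)$ via \eqref{reduction2} and the canonical bracket there, which confirms the map is Poisson and simultaneously catches any sign or orientation error in the determinants. Once these identities and the pairings check out, the proposition follows.
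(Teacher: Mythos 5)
Your proposal is correct and follows essentially the same route as the paper, which proves the proposition simply by ``straightforward tracking'' of the composition $(i)\circ(ii)\circ(iii)$ of $\psi_\na$, the cotangent lift of the Gauss-map identification \eqref{eq:TS2SO3}, and the left-trivialized projection. Your execution — pairing the split covector with $\partial_v$ and with the left-invariant fields $R\hat{\bf e}_k$, using $\na_{\dot x}{\bf w}=\dot{\bf w}+({\bf w}\cdot\dot x)x$ to isolate the vertical parts — fills in the details the paper omits, and indeed yields exactly the stated formulas for $a$, $M_1$, $M_2$, $M_3$.
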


\section{Comments  and further questions} \label{conclusions} 

\subsubsection*{Control  systems on anchored vector bundles}  \label{doublebundles}  More generally than in \eqref{stateequation}, which is a control problem with state space $A = TQ$, one could consider control problems with state space $A\ni (x,a)$ being a vector (or, more generally, affine) bundle  $q:A \to Q$ with a connection $\nabla$,  and state equations of the form
$$ \dot{x} = \rho(a), \ \nabla_{\dot{x}} a = u$$
with $\rho: A \to TM$  a given ('anchor') map. In this paper, we have been considering the particular case $A=TQ$ and $\rho=id$. Also note that for $u = 0$ (uncontrolled problem)  we recover the geodesic equations relative to $(\rho,\na)$.

Examples of such systems arise from nonholonomic control problems  \cite{Bloch}  and control on (almost) algebroids
\cite{MJozi}  (for background on algebroids, see  also \cite{paulette}, \cite{alan1},  \cite{manoloetal}).   We observe that control problems with state space $TQ$ with a Levi-Civita connection
can be recast, via the dual connection, to a control problem with state space $T^ * Q$.  This is a usefull observation for landmark splines,  since the problem is best described in terms of
a cometric.

In general, the PMP leads to the cotangent bundle $T^*A$ and to the problem of finding the optimal hamiltonian. The connection $\na$ allows us to obtain split variables generalizing those of section \ref{subsec:splitvar},
$$ T^*A \simeq q^*(T^*Q)\oplus_A q^*A^* .$$
Proposition \ref{mainproposition1} generalizes to this more general setting and provides a formula for the symplectic structure in split variables (also containing curvature terms). Moreover, the optimal hamiltonian is easy to find in these variables just as in \ref{subsec:splitvar}. This will be detailed in \cite{AlePoiJK}.

 \subsubsection*{Higher order splines and natural curvatures}  
For future work one can think of higher order splines as a  control  problem with state equation corresponding to
$$ \na^{(k+1)}_{\dot{x}} \dot{x} = u .$$
The state space can be taken to be $A=J^kQ$, the space of $k$-jets on $Q$. The  cost functional can depend on $u$ and (possibly) lower order covariant derivatives $D^{(i)}_{\dot{q}} \dot{q}\,,\, i = 1, \cdots k$. An optimal curve $\gamma(t)$  should connect two prescribed  $k$-jets    $j^{(k)}|_{x_o},\, j^{(k)}|_{x_1}\,\,$ (in computational anatomy it is often  required that  $\gamma(t)$
passes through  a  number of intermediary points at prescribed times). In this paper, we have treated the case $k=1$.

We remark that such a system is related to two very nice papers by  Gay-Balmaz,   Holm, Meier,   Ratiu,  and  Vialard \cite{Gay-Balmaz2012a,Gay-Balmaz2012} on higher order lagrangians. 

Our general approach via PMP applies here as well. One obtains $T^*(J^k(Q))$ and has to find the optimal hamiltonian. At this point, one notices that $J^k(Q)$ fits into a tower of affine bundles
$$ \cdots  J^{k}(Q) \rightarrow  J^{(k-1)}(Q) \rightarrow \cdots  \rightarrow J^1(Q) \rightarrow Q,$$ 
see \cite{Lewisaffine}. One can thus find split variables for  $T^* (J^{k}(Q)) $ recursively using a given connection on  $J^1 Q = TQ$. The higher order curvatures of the underlying curve $\gamma(t)\in Q$ will be related to components of the control, similarly to what we obtained in appendix \ref{convexsurfaces} for $Q$ a surface (and $k=1$) and, more generally, to what happens in \emph{elastica} (see e.g.  \cite{langer1984}, \cite{jurdjevic2014,jurdjevic2016}).  We plan to pursue this in a sequel paper.

\subsubsection*{Is accessibility  an issue?}  For the general context of accessibility in mechanical control problems, see \cite{barbero2011}.
In Alan Weinstein's  Ph.D. dissertation \cite{Weinstein1968},  about cut and conjugate loci  on Riemannian manifolds,  there is basic lemma  stating  that,  if the manifold is complete, connected, and of finite volume, then any two unit tangent vectors can be joined  by a smooth curve,   parametrized by arc length, 
with geodesic ends, having   geodesic curvature {\it smaller than any arbitrarily small bound.}  Using this  result it is easy to show accessibility for the time-minimal, bounded acceleration problem.  We wonder if the same is true in higher order, namely joining two given 2-jets by 
a curve with arbitrarily small ``jerk''.

\subsubsection*{Controllability on   vector bundles}  In the seminal paper by Lewis and Murray \cite{lewismurray}
on  configuration controllability of  mechanical systems, the concept of  symmetric product  of vector fields was introduced. Their results were  extended to mechanical systems with constraints and symmetries, see \cite{cortesetal}. For a geometric interpretation, see \cite{barbero2012}.  Can  the techniques  be used  in the  general context of control problems on  vector bundles with connection?  Note that the control  appears in  a fraction  ${\rm rank}(A)/({n+{\rm rank}(A)})$  of the equations (further, the  system can be sub-actuated).
 For  results on controlabillity of affine connection mechanical systems, see \cite{BarberoLinan2010}.

\subsubsection*{Diffusion PCA} More generally, on any framework where the phase space is a cotangent bundle $T^*P$ of a manifold with a bundle structure $P\to B$ (and a connection), a splitting of variables will be  useful. In the case of principal bundles  $G \to P \to P/G$,  the reduction of  $T^* P$ goes back to Kummer \cite{Kummer, Kummer1986} in the 1980's.
For instance,  a theory for diffusion principal component analysis (PCA) was developed by Sommer \cite{Sommer2015a},   based upon    stochastic development via  
Eells-Elworthy-Malliavin construction of Brownian motion \cite{Elworthy1988}, \cite{Sommer2015}.  A Hamiltonian system on the cotangent bundle of the frame bundle $T^* ({\rm Fr}(Q))$, governs the most probable paths\footnote{A code is available in
\url{https://github.com/stefansommer}.}.  
 
\subsubsection*{Interpreting the terms in  the (simple splines) Hamiltonian equations} 
Peter  Michor observed at the workshop that  the extra terms in the equation for $ \dot{p}_i$ in Proposition \ref{mainproposition1}   could be related to the concepts of   symmetrized force and  shape stress  in his work with Michelli and Mumford \cite{Michor2015}.

\subsubsection*{$L^{\infty}$ vs.  $L^2$}    For certain applications, Noakes has argued that $L^{\infty}$  could be better than $L^2$.
  Indeed,  from the mathematical side,  a drawback of  cubic splines  is that  for  manifolds of negative curvature  the velocity can become infinite in finite time 
\cite{PauleyNoakes2012}.    One can anticipate this behavior from equations  (\ref{mainequationsa}). They contain   curvatures -   signs  matter.  In contradistinction, under   bounded acceleration constraint,  the scalar velocity grows at most linearly, so in all cases  trouble is avoided by default.   Would that be physiologically reasonable?    Some simple experiments with $n=d=1$  shows that, for cubic splines, the acceleration can attain  high values of during the prescribed  time interval,  while the  time minimal bounded acceleration can do the job in not a much longer time, depending on the  concrete problem at hand.   For robotics applications,  or for an athlete, disastrous consequences could happen  if the norm of the control force  exceeds a  given bound at some instant, see  \cite{Ting2012},  \cite{Sohn2013}.

\subsubsection*{Singular reduction} 
\label{singular}   
$SO(3)$ does not act freely on $TS^2$:  trouble  happens  when the scalar velocity $v$ vanishes  (ie., the zero section of $TS^2$), and
this propagates to  non-freeness of  the action of $SO(3)$ on  the symplectic manifold $T^*(TS^2)$.   More generally,  one may consider  a vector bundle $A \to Q$  with a $G$-action,  that is not free on the base (hence on the zero section).    A procedure to do the singular Hamiltonian reduction of $T^*A$ is in order, a research direction that we hope to address in the future\footnote{Tudor Ratiu, Miguel Rodriguez-Olmos and Mathew Perlmutter are working out a general theory of singular reduction.  Their results for $T^*Q$, where the $G$ action on $Q$ is not free, should be
expanded to  $T^*(TQ)$.}. 

\subsubsection*{Applying Morales-Ramis theory}   Let us go back to cubic splines in $S^2$ as described in section \ref{observations}. Since the kissing circles unstable periodic orbits are explicitly known, one may hope to prove  nonintegrability  using the  Morales-Ramis approach \cite{Morales}.  However, linearizing (\ref{CLn}) and doing the required Galois theory for the time periodic linear equations would be, no doubt,  a tour-de-force.  On could also attempt to show nonintegrability linearizing around  the  equator solutions.

\subsubsection*{Controlled Lagrangians} In \cite{Jerryetal, Jerryetal2}  the concept of  {\it controlled Lagrangian} is introduced, for mechanical systems $L = T-V$ of natutal type.  The  control forces here keep the conservative nature of the controlled system. This is achieved by conveniently shaping the kinetic and/or potential energy.   The modified system  is still  a closed-loop system, and  the controlled system is Lagrangian by construction.  Energy methods are used to find control gains that yield closed-loop stability. It would be interesting to see if such methods could be used to match tangent vectors.

\subsubsection*{Splines in infinite dimensional Riemannian geometry}  This is a special edition  about  a meeting on infinite-dimensional Riemannian geometry,  so we now try to link the present work to the infinite dimensional setting. As it is customary, the idea is to use the present study of systems with underlying finite dimensional configuration spaces  $Q$ as simplified models for the cases in which $Q$ is an infinite dimensional Riemannian manifold. In this direction,    the Levi-Civita connection and the curvature of  Sobolev metrics  on $Q={\rm Diff} $ have been studied  by several authors,  see eg.   \cite{Bauer2014}, \cite{Mich2Mum}, \cite{harms},  \cite{Khesin2013,Khesinetal2013,KhesinWendt2009book},  \cite{MumfordMichor2013}. Below we enumerate some related questions.

\begin{enumerate} 
 \item[$(i)$] {\it PDEs for splines in shape space.} This means optimal control problems  with state space  $A = T {\rm Diff}$.  In order to  describe $T( T{\rm Diff})$ and  $T^*( T{\rm Diff})$,  one can take advantage of the fact that ${\rm Diff}$ is a group and, thus, $T{\rm Diff} = {\rm Diff} \times \mathcal{X}$.  What are  the corresponding  PDEs for  $L^2$ and  $L^{\infty}$ splines?  They should involve  not only  the momentum density ${\bf m}$  but  another density $\bf{p}$ corresponding to a Pontryagin multiplier (alternatively,   a PDE for ${\bf m}$ involving three time derivatives).

 \item[$(ii)$] {\it Lifting landmark splines.}  Consider splines on a finite dimensional landmark space,  i.e., with state space  $A = T (\R^d)^N $ and  cometric given by a Green function $G(x,y)$. In the case of a finite number $N$  of (point) landmarks, Mario Michelli  \cite{Micheli2012, Mich2Mum} has implemented the  geodesic equations for the landmark cometrics.
 In a   similar way as it can be done for EPDiff, can these be  lifted to solutions of a corresponding infinite dimensional spline problem on $T{\rm Diff}$?   {\it Faute de mieux},  one would use  the same ansatz (\ref{u}) to move other points in $\mathcal{D}$, but  in doing so we would be neglecting the new costate variables.   Once this question is elucidated, one could proceed to numerical discretization, see e.g. \cite{chertock2012}, \cite{Bauer2016},  \cite{pavlov}   for geodesics in ${\rm Diff}$.

\end{enumerate}

\bigskip

\noindent {\bf  Acknowledgements.}
Supported by Brazil's Science without Frontiers  grants  on Geometric Mechanics and Control,  PVE011-2012 and PVE089-2013.  We thank Darryl Holm, Tudor Ratiu and Richard Montgomery  and Alain Albouy for their generous participation in the project and Marco Castrillon for useful discussions.   JK  wishes to thank Martins Bruveris, Martin Bauer and Peter Michor for the invitation to  the  Program on Infinite-Dimensional Riemannian Geometry with Applications to Image Matching and Shape Analysis at the  Erwin Schrodinger Institute.

\begin{appendix}

 \section{Fortran program for reconstruction} \label{program} 

\footnotesize{

\begin{verbatim}
 
      implicit real*8(a-h,o-z)
      dimension z(13),b(13),f(13),r(13,13)
      common erk,amu,beta,rr
      external dertres
      erk=1.d-13
      n=13
 
c     parameters
      rr=2.d0
      amu=2.d0
      beta=1.d0
      pi=4.0*datan(1.d0)
 

c   variables are in order 1 to 13:
c   r13 r23 r33  r11 r21 r31 r12 r22 r32    v  a  tetha phi

c initial conditions
	 z(1)=0.d0
	 z(2)=0.d0
	 z(3)=1.d0
	 z(4)=1.d0
	 z(5)=0.d0
	 z(6)=0.d0
	 z(7)=0.d0
	 z(8)=1.0d0
	 z(9)=0.d0
	 z(10)=(amu*rr/(beta*dsqrt(2.d0)))**(1./3)
	 z(11)=0.0d0
	 z(12)=pi/2.d0
	 z(13)=pi/4.d0
c
      t=0.d0
      e=erk
      n=13
      h=.01d0
      hmi=1.d-8
      hma=.1d0
c	
      do i=1,200
	  call rk78n(t,z,n,h,hmi,hma,e,r,b,f,dertres)	  
	  write(20,*)z(1),z(2),z(3)
	  write(21,*)z(10),z(11),z(12),z(13)
      enddo
      stop
      end

C--------------------------------------------------------------------
C   FORTRAN SUBROUTINE FOR  EDOS  
C-------------------------------------------------------------------                
     

c  Runge Kutta code courtesy of C. SIMO group 

 subroutine dertres(a,b,n,f)
      implicit real*8(a-h,o-z)
      dimension b(13),f(13)
      common erk,amu,beta,rr
	 vr=b(10)/rr
	 vb=beta*b(10)**2
	 am3=amu*dcos(b(13))*dsin(b(12))
      f(1)=vr*b(4)
      f(2)=vr*b(5)
      f(3)=vr*b(6)
      f(4)=am3*b(7)/(vb)-b(1)*vr
      f(5)=am3*b(8)/(vb)-b(2)*vr
      f(6)=am3*b(9)-b(3)*vr
      f(7)=-am3*b(4)/(vb)
      f(8)=-am3*b(5)/(vb) 
      f(9)=-am3*b(6)/(vb)
      f(10)=b(11)/beta
      f(11)=-amu*dsin(b(13))/rr+amu**2*dcos(b(13))*dsin(b(12))**2/
     #	 (beta*b(10)**3)
      f(12)=(b(10)/(rr)-amu*(sin(b(12)))**2*dsin(b(13))/(beta*b(10)**2))
      f(13)=-amu*dcos(b(13))*dsin(b(12))*dcos(b(12))/(beta*b(10)**2) 
      return
      end
    \end{verbatim}
    }

\section{State equations on convex surfaces and the Gauss map} \label{convexsurfaces} 
In this appendix, we elaborate on a description of non-zero tangent vectors on a convex surface $\Sigma$ which uses the Gauss map. We use it to  provide an alternative form of the state equations \eqref{eq:generalstateeq} on $T\Sigma$. This description is used in section \ref{observations} in the particular case of $\Sigma=S^2$ to exploit the rotational symmetry.

 Let   $\Sigma \subset \R^3$ be a closed smooth convex surface.
The Gauss map  induces a diffeomorphism between $T\Sigma - 0$ and     $\R_+ \times SO(3)$: 
  \begin{equation}  \label{Gauss}   
    {\bf v}_q \,  \leftrightarrow    \,\, (v,R)\,\,\, ,\,\,\,\,\,  v =  ||{\bf v}_q || \neq 0
 \end{equation}
where $R \in SO(3)$ is constructed as follows: points  $q \in \Sigma$  correspond uniquely to  external unit normal vectors to the surface,   which we denote $e_3$.
 Now, a  nonzero tangent vector  ${\bf v}_q$   corresponds uniquely to a pair   $(v,  e_1)$  with  
  \begin{equation*}  
  {\bf v}_q =  v \,  e_1\,\,,\,\,  v  > 0\,\,\,  ,\,\, {\rm and}\,\,\,  e_1 \cdot  e_3 = 0 \,\,, \,\, |e_1|=|e_3| = 1 .
  \end{equation*}
 We use a redundant vector   $e_2 = e_3 \times e_1$  to construct the matrix  $R$ with columns $e_1,e_2,e_3$. 
 Therefore,  a control problem with state space  $T\Sigma$  corresponds to a control problem on  $SO(3) \times R_+$,  provided we exclude  the zero section\footnote{Therefore, it is important to characterize  which splines $\gamma(t)$   can have zero velocity at a certain time instant.  Are these splines non-generic?  At any rate,     laziness is not  expected on cubic splines:  $v$ should not vanish on an interval.
One expects (or at least hopes) that  $e_1$ can  be smoothly continued across $v=0$.   Some ideas are given 
section \ref{singular}.}.
 
Let us now move on to rewritting the state equations \eqref{eq:generalstateeq} in our present situation. Recall the Darboux formulas for a curve $\gamma(s)\in\Sigma$  ($' = d/ds$) 
$$  
e_1' = \kappa_g \,e_2 + \kappa_n\,e_3\,, \quad e_2' = - \kappa_g \,e_1 + \tau_g \,e_3\,, \quad e_3' = -\kappa_n \,e_1 - \tau_g\,e_2
$$
  where  $\kappa_g $ is the geodesic curvature,  $\kappa_n$ the normal curvature, and $\tau_g$ the geodesic torsion of
   $\gamma$. These formulas can be rewritten as  
  $$ 
  \dot{R} = R \, X \quad \mbox{with}\quad X = v\, \left( \begin{array}{ccc}   0 & - \kappa_g  &  - \kappa_n  \\   \kappa_g &  0 &  -\tau_g\\  \kappa_n    & \tau_g & 0  \end{array}      \right) .
$$
   The normal curvature is not freely controllable since it corresponds to the force that constrains the curve to stay in the surface.   Indeed, taking derivatives in the ambient space,
  $$  \ddot{\gamma} = \dot{v}\,e_1 + v^2 \,e_1' =  \dot{v} \,e_1 +  v^2 ( \kappa_g \,e_2 + \kappa_n\,e_3) = \nabla_{\dot{\gamma}}\, \dot{\gamma} + 
  v^2  \kappa_n\,e_3
  $$
with  $ \kappa_n = e_1' \cdot e_3 = - e_3'\cdot e_1 := B(e_1,e_1)$   where $B$ is the second fundamental form of the surface. On the other hand, the intrinsic description of the state equations, using the Levi-Civita connection, reads
  \begin{equation}  \label{nabla}  
  \nabla_{\dot{\gamma}}\, \dot{\gamma} = \, u_1 \, e_1  +  u_2\, e_2 
  \end{equation}
  where $u_1,u_2$ are the controls. The previous simple calculation thus showed that
\begin{equation} \label{controls}   
u_1 = \dot{v} \quad \mbox{and} \quad  u_2 =  v^2 \kappa_g.
\end{equation} 
 But the geodesic torsion $\kappa_g$ also admits the following interesting formula found by Darboux
  \begin{equation}  \label{torsion}  
  \tau_g =  \tau_g(e_1) = (\kappa_1-\kappa_2) \sin \phi \, \cos \phi
  \end{equation}
where  $\phi$ is the angle between the unit tangent vector $e_1$  to the curve  and a principal direction on the surface.  We then conclude that the state equations can be written as
 \begin{equation}  \label{state}   
 \dot{v}  =   u_1\,, \quad  \dot{R} = R \, X\,, \quad  
   X = \left( \begin{array}{ccc}   0 & -u_2/v  &   -v\, B(e_1,e_1)  \\  u_2/v &  0 &  -v\, \tau_g( e_1 ) \\      v\, B(e_1,e_1)    &  v\,\tau_g( e_1 ) & 0  \end{array}      \right) \,. 
\end{equation}
\vspace*{1mm}

 \end{appendix}

   \bibliographystyle{AIMS}

\bibliography{mybiblio}

\end{document}